\numberwithin{equation}{section}
\newtheorem{theorem}{Theorem}[section]
\newtheorem{lemma}[theorem]{Lemma}
\newtheorem{lem}[theorem]{Lemma}
\newtheorem{prop}[theorem]{Proposition}
\newtheorem{coro}[theorem]{Corollary}
\newtheorem{corollary}[theorem]{Corollary}
\theoremstyle{definition}
\newtheorem{definition}[theorem]{Definition}
\newtheorem{defi}[theorem]{Definition}
\newtheorem{remark}[theorem]{Remark}
 \newtheorem*{ackn}{Acknowledgements}
 \newtheorem*{thmA}{Theorem A} 
 \newtheorem*{thmB}{Theorem B}
 \newcommand{\R}{\mathbb R}
 \newcommand{\C}{\mathbb C}
 \newcommand{\N}{\mathbb N}
 \newcommand{\e}{\varepsilon}
 \newcommand{\f}{\varphi}
 \newcommand{\p}{\psi}
\subjclass[2010]{32W20, 32U05, 32Q15, 35A23}
\keywords{Complex Hessian and Monge-Amp\`ere equations,  a priori estimates}
\begin{document}

\title[Degenerate complex Hessian equations]{Degenerate complex Hessian equations on compact Hermitian manifolds}
\author{Vincent Guedj \& Chinh H. Lu}

\address{Institut de Math\'ematiques de Toulouse   \\ Universit\'e de Toulouse \\
118 route de Narbonne \\
31400 Toulouse, France\\}

\email{\href{mailto:vincent.guedj@math.univ-toulouse.fr}{vincent.guedj@math.univ-toulouse.fr}}
\urladdr{\href{https://www.math.univ-toulouse.fr/~guedj}{https://www.math.univ-toulouse.fr/~guedj/}}

\address{Univ Angers, CNRS, LAREMA, SFR MATHSTIC, F-49000 Angers, France.}

\email{\href{mailto:hoangchinh.lu@univ-angers.fr}{hoangchinh.lu@univ-angers.fr}}
\urladdr{\href{https://math.univ-angers.fr/~lu/}{https://math.univ-angers.fr/~lu/}}
\date{\today}

 \begin{abstract}
  In this note we provide uniform a priori estimates for solutions to degenerate complex Hessian equations
  on compact hermitian manifolds.
  Our approach relies on the corresponding a priori estimates for Monge-Amp\`ere equations; it provides an extension as well as a short
  alternative proof to results of Dinew-Ko{\l}odziej, Ko{\l}odziej-Nguyen and Guo-Phong-Tong.
\end{abstract}

 \maketitle

 \begin{center}
{\em \`A la m\'emoire de Jean-Pierre Demailly.}
\end{center}

\vskip1cm


\section*{Introduction}

 Let $X$ be a compact complex manifold equipped with a hermitian form $\omega_X$.
  We fix $1 \leq k \leq n=\dim_{\C} X$, $\omega$ another hermitian form, 
 and  $\mu_X$ a volume form on $X$.
 The complex Hessian equation is 
 $$
 (\omega+dd^c \f)^k \wedge \omega_X^{n-k}=c \mu_X,
 $$
 where $c>0$ is a positive constant 
  and $\f$ (the unknown) is a smooth $(\omega,\omega_X,k)$-subharmonic function,
 i.e.   $(\omega+dd^c \f)^j \wedge \omega_X^{n-j} \geq 0$  for all $1 \leq j \leq k$.
 
 When $k=1$ this reduces to the Laplace equation. When $k=n$ this yields the complex
 Monge-Amp\`ere equation which was famously solved by Yau \cite{Yau78} when $\omega$ is K\"ahler,
 and by Tosatti-Weinkove \cite{TW10} in the hermitian setting.
 
 From now on we assume that $1<k<n$. A unique solution   has been provided
 by Dinew-Ko{\l}odziej \cite{DK17} when  $\omega=\omega_X$  is K\"ahler; the hermitian setting has
 been resolved independently by Sz\'ekelyhidi \cite{Szek18} and Zhang \cite{Zh17}.
 
 \smallskip
 
It is quite natural to consider such equations in degenerate settings, 
allowing the measure $\mu_X=f\omega_X^n$ to be merely absolutely continuous with respect
to the volume form $\omega_X^n$.
Such degenerate complex Hessian equations
have been intensively studied in the past fifteen years (see
\cite{Blo05,Lu13,DK14,DL15,LN15,Lu15,KN16,Char16,GN18,DDT19,BZ20,GP22} and the references therein).

In this note we  go one step further by allowing the 
form $\omega$ to degenerate as well:
we no longer assume it is positive, we merely ask that it is {\it big},
in analogy with the corresponding notion from algebraic geometry \cite{BEGZ10},
i.e. we assume there is an $\omega$-psh function $\rho$
with analytic singularities such that 
$\omega+dd^c \rho$ dominates a hermitian form.
We let $\Omega$ denote the Zariski open set where $\rho$ is smooth.

One can not expect any longer that $(\omega,\omega_X,k)$-subharmonic functions are smooth in $X$, but we 
show in Section \ref{sec:defoperator} how to define the Hessian operator
$H_k(f)=(\omega+dd^c \f)^k \wedge \omega_X^{n-k}$ for such functions
$\f$ which are {\it continuous} in $\Omega$ and globally bounded on $X$.

\smallskip

In this context we obtain the following:

\begin{thmA}{\it 
Let $\omega$ be a semi-positive and big form
and $f \in L^p(\omega_X^n)$ with $p>n/k$ and $\int_X f \omega_X^n>0$.
There exists $c>0$ and $\f$  a bounded $(\omega,\omega_X,k)$-subharmonic function, continuous in $\Omega$, such that
 $$
 (\omega+dd^c \f)^k \wedge \omega_X^{n-k}=c f \omega_X^n.
 $$}
\end{thmA}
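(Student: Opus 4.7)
The plan is to obtain $\varphi$ as a limit of smooth solutions of non-degenerate Hessian equations. For $\varepsilon \in (0,1]$, set $\omega_\varepsilon := \omega + \varepsilon \omega_X$, a smooth Hermitian metric since $\omega$ is semi-positive, and choose smooth positive densities $f_\varepsilon$ converging to $f$ in $L^p(\omega_X^n)$ with $\int_X f_\varepsilon\, \omega_X^n = \int_X f\, \omega_X^n$. The existence results of Székelyhidi \cite{Szek18} and Zhang \cite{Zh17} then furnish smooth $(\omega_\varepsilon,\omega_X,k)$-subharmonic solutions $\varphi_\varepsilon$ with $\sup_X \varphi_\varepsilon = 0$ and constants $c_\varepsilon > 0$ such that
\[
(\omega_\varepsilon + dd^c \varphi_\varepsilon)^k \wedge \omega_X^{n-k} = c_\varepsilon\, f_\varepsilon\, \omega_X^n.
\]
Everything then reduces to producing uniform bounds on $c_\varepsilon$ and $\|\varphi_\varepsilon\|_{L^\infty(X)}$, extracting a limit, and verifying the equation in the sense of Section~\ref{sec:defoperator}.

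The main obstacle is the uniform $L^\infty$ bound, and this is where the Monge-Ampère theory enters. The underlying principle, going back to Dinew-Ko{\l}odziej, is that a Newton-Maclaurin (G{\aa}rding) inequality converts a density bound on the mixed Hessian measure into a density bound on an auxiliary Monge-Ampère measure whose density lies in $L^{pk/n}$; since $p > n/k$, the exponent $pk/n$ is strictly greater than $1$, which is exactly what the Monge-Ampère $L^\infty$ theory requires. Concretely, after adding $K \omega_X$ with $K$ large enough depending only on $\omega$ and $\omega_X$ so that $T_\varepsilon := \omega_\varepsilon + dd^c \varphi_\varepsilon + K \omega_X$ is pointwise positive, expanding $T_\varepsilon^n$ and applying Newton-Maclaurin dominates it pointwise by $C_{n,k}(c_\varepsilon f_\varepsilon)^{n/k} \omega_X^n$ plus lower-order mixed terms $T_\varepsilon^j \wedge \omega_X^{n-j}$ with $j < k$. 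One then invokes the $L^\infty$ a priori estimates for degenerate Monge-Ampère equations on big Hermitian classes, as established by Ko{\l}odziej-Nguyen \cite{KN16} and Guo-Phong-Tong \cite{GP22} (and in \cite{BEGZ10} for the Kähler big setting), applied to the $(\omega_\varepsilon + K \omega_X)$-plurisubharmonic function $\varphi_\varepsilon$, yielding a uniform bound $\|\varphi_\varepsilon\|_{L^\infty} \leq M$ depending only on $\|f\|_{L^p}$, $p$, and the geometric data $(\omega,\omega_X,\rho)$. Two-sided control of $c_\varepsilon$ then follows by integrating the equation: the mass $\int_X (\omega_\varepsilon + dd^c \varphi_\varepsilon)^k \wedge \omega_X^{n-k}$ is $O(1)$ by Hermitian integration by parts together with the Skoda-type $L^1$ bound on $\varphi_\varepsilon$, while bigness of $\omega$ combined with $\int_X f \omega_X^n > 0$ prevents degeneration from below.

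With the uniform estimates in hand, pass to a subsequence with $c_\varepsilon \to c > 0$ and $\varphi_\varepsilon \to \varphi$ in $L^1(\omega_X^n)$; the limit $\varphi$ is bounded and $(\omega, \omega_X, k)$-subharmonic by lower semicontinuity. To establish continuity of $\varphi$ on the Zariski open set $\Omega$ where $\rho$ is smooth, I would work on compact subsets $K \Subset \Omega$ on which $\omega + dd^c \rho \geq \varepsilon_0 \omega_X$; a small shift $\varphi_\varepsilon \mapsto \varphi_\varepsilon + \delta \rho$ moves into a genuinely Hermitian regime locally, so classical second-order Hessian estimates apply uniformly in $\varepsilon$ on $K$, producing equicontinuity and hence continuity of the limit on $\Omega$. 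Finally, convergence of the Hessian measures $(\omega_\varepsilon + dd^c \varphi_\varepsilon)^k \wedge \omega_X^{n-k}$ to $(\omega + dd^c \varphi)^k \wedge \omega_X^{n-k}$ in the sense of Section~\ref{sec:defoperator} is supplied by Bedford-Taylor-type continuity of the Hessian operator along uniformly bounded sequences converging in capacity. The delicate thread throughout is to keep the Monge-Ampère reduction uniform in $\varepsilon$ while $\omega$ is only big and not Kähler, which is why the analytically singular potential $\rho$ is essential.
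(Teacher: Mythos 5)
Your overall scaffolding (regularize $\omega$ and $f$, solve smooth Hessian equations via Sz\'ekelyhidi/Zhang, pass to a limit) matches the paper's, but the central step --- the uniform $L^\infty$ bound --- is where your proposal has a genuine gap, and it is precisely here that the paper introduces its new idea. You propose to add $K\omega_X$ with $K$ depending only on $\omega,\omega_X$ so that $T_\varepsilon:=\omega_\varepsilon+dd^c\varphi_\varepsilon+K\omega_X$ is pointwise positive, and then to control $T_\varepsilon^n$ by Newton--Maclaurin. This fails for two reasons. First, for $1<k<n$ membership of $\omega_\varepsilon+dd^c\varphi_\varepsilon$ in $\Gamma_k(\omega_X)$ gives no uniform lower bound on its smallest eigenvalue (only the trace and the lower elementary symmetric functions are positive), so no $K$ depending only on the geometric data makes $T_\varepsilon$ positive uniformly in $\varepsilon$. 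Second, even granting positivity, Newton--Maclaurin estimates $T_\varepsilon^n$ in terms of $T_\varepsilon^k\wedge\omega_X^{n-k}$, and $T_\varepsilon^k\wedge\omega_X^{n-k}=\sum_{j\le k}\binom{k}{j}K^{k-j}(\omega_\varepsilon+dd^c\varphi_\varepsilon)^j\wedge\omega_X^{n-j}$ contains the \emph{uncontrolled} lower-order terms $j<k$, in addition to the density term $c_\varepsilon f_\varepsilon$. These are nonnegative but have no pointwise upper bound, so you do not obtain a Monge--Amp\`ere inequality with $L^{1+\delta}$ right-hand side, which is what the $L^\infty$ theory requires.

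The paper's resolution (Theorem~\ref{thm:uniform}) avoids both obstructions by taking the $\omega$-plurisubharmonic envelope $u=P_\omega(\varphi_\varepsilon)$. Because $u$ is genuinely $\omega$-psh, the form $\omega+dd^c u$ is positive (no additive shift needed). The Monge--Amp\`ere measure $(\omega+dd^c u)^n$ is concentrated on the contact set $\{u=\varphi_\varepsilon\}$, where Lemma~\ref{lem:Demkey} gives $(\omega+dd^c u)^k\wedge\omega_X^{n-k}\le (\omega+dd^c\varphi_\varepsilon)^k\wedge\omega_X^{n-k}\le f\,dV_X$, and then G{\aa}rding's mixed inequality \eqref{eq:mixed} (applied to a \emph{positive} form) yields $(\omega+dd^c u)^n\le C f^{n/k}dV_X$ with $f^{n/k}\in L^{pk/n}$, $pk/n>1$. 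This is a clean reduction to Theorem~\ref{thm:uniformHessian}. You should also note that continuity of $\varphi$ in $\Omega$ is obtained in the paper via the stability estimate (Theorem~\ref{thm:stability}) and a Cauchy argument in $j$, not via uniform local $C^2$ estimates, and that the two-sided bound on $c_\varepsilon$ uses auxiliary Tosatti--Weinkove Monge--Amp\`ere solutions together with the domination principle (Corollary~\ref{cor: min principle}); your sketch of these points is plausible but the envelope step is the one you cannot do without.
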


This result extends the main result of \cite{KN16}.
It applies in particular to the case when $\pi:X \rightarrow Y$ is the desingularization
of a compact hermitian variety $Y$, and $\omega=\pi^* \omega_Y$ is the pull-back
of a hermitian form $\omega_Y$ on $Y$.

\smallskip

As is often the case in similar contexts, the main difficulty lies in establishing uniform a priori estimates.
This is the content of our second main result:

\begin{thmB}{\it 
Let $\omega_0,\omega$ be  semi-positive and big forms  such that $\omega_0 \leq \omega$.
Let $\f$ be a smooth $(\omega,\omega_X,k)$-subharmonic function solution to 
 $$
 (\omega+dd^c \f)^k \wedge \omega_X^{n-k}=f \omega_X^n,
 $$
where  $f \in L^p(dV_X)$, $p>n/k$.
 Then
 $$
 {\rm Osc}_X(\f) \leq C,
 $$
 where $C>0$ depends on $\omega_0,n,k,p$ and an upper bound for
 $||f||_{L^p}$.}
\end{thmB}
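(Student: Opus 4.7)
The plan is to reduce the Hessian $L^\infty$ bound to the known Monge--Amp\`ere $L^\infty$ bound on semi-positive and big hermitian classes, using Maclaurin's inequality on the fully positive cone to relate $H_k$ and $H_n$.

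Normalize so that $\sup_X \varphi = 0$. Since $f \in L^p$ with $p > n/k$, the function $F := f^{n/k}$ lies in $L^q$ with $q = pk/n > 1$. The first step is to invoke the MA $L^\infty$-estimate on $(X,\omega)$ to produce a bounded $\omega$-psh function $\psi$, continuous on $\Om$, solving
\[
  (\omega + dd^c \psi)^n \,=\, c\, F\, \omega_X^n, \qquad \sup_X \psi = 0,
\]
for some $c > 0$, with $\mathrm{Osc}_X(\psi) \leq C_1$ depending only on $\omega_0$, $n$, $k$, $p$, and $\|f\|_{L^p}$.

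The heart of the argument is a pointwise comparison between $\varphi$ and $\psi$ obtained by the maximum principle applied to $v := \psi - A\varphi + \e\rho$, where $A > 1+\e$ is a parameter to be chosen and $\e > 0$ is small. The poles of $\rho$ along $X \sm \Om$ force $\sup_X v$ to be attained at some $x_0 \in \Om$. At $x_0$, $dd^c v(x_0) \leq 0$ rearranges, using $\omega + dd^c\rho \geq \omega_0$ and $\omega \geq 0$, to
\[
  A\, \omega_\varphi(x_0) \,\geq\, \omega_\psi(x_0) + \e\, \omega_0.
\]
Hence $\omega_\varphi(x_0)$ is fully positive, and Maclaurin's inequality on $\Gamma_n^+$ gives $\omega_\varphi^n(x_0) \leq C_{n,k}\, f^{n/k}(x_0)\, \omega_X^n(x_0)$. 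Combining with $A^n \omega_\varphi^n(x_0) \geq \omega_\psi^n(x_0) = c\, F(x_0)\, \omega_X^n(x_0)$ (by positivity of both sides of the lower bound), one obtains $c \leq A^n C_{n,k}$ whenever $f(x_0) > 0$. Choosing parameters so that $A^n C_{n,k} < c$ forces the supremum of $v$ to be attained on the null locus $\{f = 0\}$; a standard regularization (replace $f$ by $f + \delta$ and pass to the limit) turns this into an effective upper bound on $\sup_X v$ controlled by $\mathrm{Osc}_X(\psi)$. Letting $\e \to 0$ then gives $\psi \leq A\varphi + C_2$ on $X$, whence $-\inf_X \varphi \leq (\mathrm{Osc}_X(\psi) + C_2)/A \leq C$.

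The main obstacle is calibrating the constants so that the inequality $c \leq A^n C_{n,k}$ produced by the maximum principle is strictly violated for some admissible $A \geq 1+\e$, while simultaneously keeping $A \cdot \mathrm{Osc}_X(\psi)$ under control in terms of the allowed data; when the natural MA normalization pins $c$ too low, this requires either a rescaling of $\varphi$ or a two-parameter modification of the auxiliary MA problem (e.g.\ replacing $F$ by $t F + s$). A secondary technical point is that $\psi$ is typically only bounded in the degenerate big regime; it is bypassed by smoothing the MA density, running the above argument on the smooth approximants, and passing to the limit using the uniform oscillation bound from the first step.
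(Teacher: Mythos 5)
Your proposal and the paper's proof share the same overall aim --- reducing the $k$-Hessian estimate to the known Monge--Amp\`ere $L^\infty$ estimate on a semi-positive and big class, via a Maclaurin/G\r{a}rding type inequality --- but the routes are genuinely different. The paper introduces the $\omega$-psh envelope $u := P_\omega(\varphi) \leq \varphi$. The Monge--Amp\`ere measure of $u$ is concentrated on the contact set $\{u=\varphi\}$, and there Lemma~\ref{lem:Demkey} gives $H_k(u) \leq H_k(\varphi) = f\,dV_X$; the mixed G\r{a}rding inequality then yields $(\omega+dd^c u)^n \leq C_0 f^{n/k}\,dV_X$, and the MA $L^\infty$ estimate applied directly to $u$ finishes the job. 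No auxiliary equation, no free constant, and only soft potential-theoretic facts. You instead solve an \emph{auxiliary} MA equation $(\omega+dd^c\psi)^n = c\,f^{n/k}\,\omega_X^n$ and try to compare $\psi$ and $\varphi$ through a pointwise maximum principle.

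This auxiliary route has a genuine gap that you flag but do not resolve, and I do not see how to resolve it within the scheme you describe. At a maximum of $v=\psi-A\varphi+\varepsilon\rho$ you obtain $A\,\omega_\varphi(x_0)\ge \omega_\psi(x_0)+\varepsilon\omega_\rho(x_0)>0$, and Maclaurin (since $\omega_\varphi(x_0)>0$ and $\omega_\varphi^k\wedge\omega_X^{n-k}=f\omega_X^n$) gives $\omega_\varphi^n(x_0)\le f^{n/k}(x_0)\omega_X^n(x_0)$; comparing determinants yields $c \le A^n$ at any max point with $f(x_0)>0$. But $c$ is fixed by the mass balance $c\int_X f^{n/k}\omega_X^n = \int_X(\omega+dd^c\psi)^n$, and nothing in the hypotheses forces $c>A^n>1$; for large $\|f^{n/k}\|_{L^1}$ one expects $c\ll 1$. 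Then the inequality $c\le A^n$ is vacuously true and gives no information about $\sup_X v$. The ``rescaling of $\varphi$'' or ``two-parameter modification'' you mention cannot easily cure this: the density $F=f^{n/k}$ is dictated by the Maclaurin step, and the requirement $A>1+\varepsilon$ is dictated by the bookkeeping $A\omega\ge(1+\varepsilon)\omega$, so the threshold $c>A^n$ is structurally in tension with the mass constraint. There is a second, independent obstruction: the pointwise maximum principle requires $\psi$ to be $\mathcal C^2$ at $x_0$, but in the semi-positive-and-big regime $\psi$ is at best bounded and continuous in $\Omega$, and smoothing the density $f$ does not help since the loss of regularity comes from the degeneracy of $\omega$ (and from $F$ being only $L^q$), not from $f$ alone. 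The envelope construction in the paper avoids both issues: comparing $\varphi$ with its \emph{own} envelope removes the free constant entirely, and the key inequality on the contact set holds at the measure level for merely bounded functions.
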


The approach of Ko{\l}odziej-Nguyen relies on a  pluripotential theory for degenerate complex Hessian equations on hermitian manifolds (mimicking the corresponding results obtained by these authors 
and Dinew for the Monge-Amp\`ere equation \cite{DK12}, \cite{KN15}, \cite{KN19}, \cite{Ng16}).
An alternative PDE proof has been recently provided by Guo-Phong-Tong-Wang in \cite{GPT21,GPTW21}
when $X$ is K\"ahler, and by Guo-Phong \cite{GP22} when $X$ is hermitian and $\omega$ is a hermitian form.

We provide in this note a   proof which relies directly on the corresponding results for the complex Monge-Amp\`ere operator.
Given $\f$ a solution of the $k$-Hessian equation with density $f$, we show that 
the $\omega$-psh envelope of $\f$ solves a Monge-Amp\`ere equation whose density
 is bounded from above by $f^{n/k}$.
A lower bound on $\f$ thus follows from the $L^{\infty}$-a priori estimates for the Monge-Amp\`ere 
equation established in \cite{GL21}. 

Our method is new and  also of interest when $X$ is K\"ahler;
it applies to more general densities (see Remark \ref{rem:optimalbound}); in particular the 
results of  Theorem A and  Theorem B
hold when $f$ merely satisfies
$$
\int_X f^{\frac{n}{k}} |\log f|^n (\log |\log f|)^{n+\delta} <+\infty,
$$
for some $\delta>0$.
Radial examples show that this condition is almost optimal.

\begin{ackn} 
This work is partially supported by the research project HERMETIC 
(ANR-11-LABX-0040), as well as by the ANR project PARAPLUI.
\end{ackn}

\section{Preliminaries}

In the whole article we let $(X,\omega_X)$ denote a compact complex manifold 
of complex dimension $n \in \N^*$,
equipped with a fixed hermitian form $\omega_X$. We use real differential operators $d= \partial + \bar{\partial}$, $d^c= i (\bar \partial -\partial)$, so that $dd^c= 2i \partial \bar{\partial}$. 

\smallskip

We also fix a semi-positive $(1,1)$-form $\omega$ which is {\it big}:

  \begin{definition}
    We say that $\omega$ is big if there exists an $\omega$-psh function $\rho$
    such that $\omega+dd^c \rho$ dominates a hermitian form.
        \end{definition}

 It follows from Demailly's regularization theorem \cite{Dem94} that one can further assume that
 $\rho$ has analytic singularities, in particular $\omega+dd^c \rho$ is a 
 hermitian form in some Zariski open set
 that we usually denote by $\Omega$ in what follows.
 
 \smallskip
 
 An important source of examples of big forms is provided by the following construction:
 if $V$ is a compact complex space endowed with a hermitian form $\omega_V$, and 
$\pi:X \rightarrow V$ is a resolution of singularities, then $\omega=\pi^* \omega_V$ is big.

 \subsection{$(\omega,\omega_X,k)$-subharmonic functions}
 
 \subsubsection{Definition}
 
 Let $U\subset X$ be an open subset and $\gamma$  a hermitian form in $U$.
  A function $h\in {\mathcal C}^2(U,\mathbb R)$ is called $\gamma$-harmonic if 
 \[
 dd^c h \wedge \gamma^{n-1} = 0 \; \text{pointwise in }\; U. 
 \]
 We let $H_{\gamma}(U)$ denote the set of all $\gamma$-harmonic functions in $U$.  A function $u : U \rightarrow \mathbb R \cup \{-\infty\}$ is $\gamma$-subharmonic if it is upper semicontinuous and for every $D\subset U$ and every $h\in H_{\gamma}(D)$ the following implication holds
 \[
 u \leq h \; \text{on} \; \partial D \Longrightarrow u \leq h \; \text{in}\; D. 
 \]
 As shown in \cite[Section 9]{HL13} 
 the  $\gamma$-subharmonicity can be equivalently defined using viscosity theory:
  $u$ is $\gamma$-subharmonic in $U$ if and only if the inequality $dd^c u\wedge \gamma^{n-1}$ holds in the viscosity sense in $U$. The latter condition means that for every $x_0\in U$ and 
  every ${\mathcal C}^2$ function $\chi$ defined in a neighborhood $V$ of $x_0$, 
  the following implication holds: 
 \[
 \left[u(x) \leq \chi(x), \; \forall x \in V\;  \text{and}\;  u(x_0)=\chi(x_0) \right] \Longrightarrow dd^c \chi \wedge \gamma^{n-1}\geq 0 \; \text{at} \; x_0. 
 \]
When $u\in {\mathcal C}^2(U)$, it  is $\gamma$-subharmonic if and only if $dd^c u \wedge \gamma^{n-1}\geq 0$ pointwise.

\begin{defi}
We let $SH_{\gamma}(U)$ denote the set of all $\gamma$-subharmonic functions in $U$ that are locally integrable.	
\end{defi}

  If $u\in SH_{\gamma}(U)$ then, by  \cite[Theor\`eme 1, p.136]{HH72}, $dd^c u \wedge \gamma^{n-1}\geq 0$ in the sense of distributions. Conversely, by  \cite[Theorem A]{Lit83}, the latter condition implies that $u$ coincides 
  a.e. with a unique function $\hat{u}\in SH_{\gamma}(U)$ defined by 
  \[
  \hat{u}(x) := {\rm ess} \limsup_{y\to x} u(y) := \lim_{r\searrow 0} {{\rm ess}\sup }_{B(x,r)} u,
  \]  
  where ${{\rm ess}\sup }_{B(x,r)} u$ is the essential supremum of $u$ in the ball $B(x,r)$.
   
   \begin{defi}
 	A function $\f:U \rightarrow \R \cup \{-\infty\}$ is quasi-$\gamma$-subharmonic if locally $\f= u+\rho$ where $u$ is $\gamma$-subharmonic and $\rho$ is smooth. 
 	
 	We say that a function $\f$ is $(\omega,\gamma)$-subharmonic if it is quasi-$\gamma$-subharmonic and  $(\omega+dd^c \f) \wedge \gamma^{n-1}\geq 0$ in the sense of distributions on $X$. 
 	
 	We let $SH_{\gamma}(U,\omega)$ denote the set of all $(\omega,\gamma)$-subharmonic functions in $U$.
   \end{defi}
   
 If $u\in {\mathcal C}^2(U,\mathbb R)$ then $u\in SH_{\gamma}(U,\omega)$ if and only if $(\omega+dd^c u)\wedge \gamma^{n-1} \geq 0$ pointwise. In general this inequality holds in the viscosity sense:

 \begin{lemma}\label{lem:max principle linear}
 If $\varphi\in SH_{\gamma}(U,\omega)$ and $\chi$ is a ${\mathcal C}^2$ function such that $\varphi-\chi$ attains a local maximum at $x_0\in U$, then $(\omega+dd^c \chi)\wedge\gamma^{n-1}\geq 0$ at $x_0$.  	
  \end{lemma}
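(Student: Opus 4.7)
The plan is to argue by contradiction and reduce the statement to the weak maximum principle for the linear elliptic operator $L(u) := dd^c u \wedge \gamma^{n-1}$ acting on distributional subsolutions. I will suppose that $(\omega + dd^c \chi)(x_0)\wedge\gamma^{n-1}(x_0) < 0$ and aim for a contradiction with the distributional inequality $(\omega + dd^c \varphi)\wedge\gamma^{n-1} \geq 0$ defining $SH_\gamma(U,\omega)$.

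First I will work in a local holomorphic chart centered at $x_0 = 0$ and, after adding a constant to $\chi$, arrange $\varphi(x_0) = \chi(x_0)$, so that $\varphi \leq \chi$ on some ball $B(0, r_0)$. By continuity one may shrink $r_0$ to some $r > 0$ and find $\delta > 0$ with $(\omega + dd^c \chi)\wedge\gamma^{n-1} \leq -2\delta\,\gamma^n$ on $B := B(0, r)$. Next I perturb by setting $\chi_\alpha(z) := \chi(z) + \alpha|z|^2$, choosing $\alpha > 0$ so small that $(\omega + dd^c \chi_\alpha)\wedge\gamma^{n-1} \leq -\delta\,\gamma^n$ still holds on $B$. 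The function $\psi := \varphi - \chi_\alpha$ is then upper semicontinuous, satisfies $\psi(0) = 0$ and $\psi \leq -\alpha|z|^2$ on $B$, and distributionally on $B$,
\[
L\psi \;=\; dd^c\varphi\wedge\gamma^{n-1} - dd^c\chi_\alpha\wedge\gamma^{n-1} \;\geq\; -(\omega + dd^c \chi_\alpha)\wedge\gamma^{n-1} \;\geq\; \delta\,\gamma^n.
\]

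To close the argument I will introduce the barrier $h(z) := \mu(r^2 - |z|^2)$ and use that $dd^c|z|^2\wedge\gamma^{n-1}$ is a continuous strictly positive $(n,n)$-form on $B$, comparable to $\gamma^n$. For $\mu > 0$ sufficiently small, $L(\psi + h) \geq 0$ still holds in the distribution sense; moreover $\psi + h$ is upper semicontinuous, quasi-$\gamma$-subharmonic, equal to $\mu r^2 > 0$ at the origin, and at most $0$ on $\partial B$. This will contradict the weak maximum principle applied to $\psi + h$.

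The hard part is precisely this application of the weak maximum principle to a merely distributional subsolution rather than to a smooth one. I intend to justify it via the results \cite[Th\'eor\`eme 1, p.~136]{HH72} and \cite[Theorem A]{Lit83} already recalled in the preceding subsection, which ensure that any upper semicontinuous locally integrable function $v$ with $Lv \geq 0$ in the distribution sense coincides almost everywhere with a genuine $\gamma$-subharmonic function, for which the classical weak maximum principle applies.
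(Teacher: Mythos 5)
Your argument is correct but follows a genuinely different route from the paper's. The paper exploits directly the viscosity characterization of $\gamma$-subharmonicity from Harvey--Lawson \cite[Section 9]{HL13}, which is already invoked a few lines earlier: choosing a quadratic $\rho$ with $dd^c\rho=a\,\omega(x_0)$ (some $a>1$) and $dd^c\rho\geq\omega$ near $x_0$, one checks that $\rho+\varphi$ is genuinely $\gamma$-subharmonic, that $(\rho+\varphi)-(\rho+\chi)$ has a local maximum at $x_0$, and concludes $(a\omega+dd^c\chi)\wedge\gamma^{n-1}\geq 0$ there; letting $a\searrow 1$ finishes. This is a one-paragraph reduction to a black-boxed equivalence. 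You, by contrast, rederive the relevant viscosity-type inequality from scratch via a contradiction/barrier argument, relying only on the comparison-principle definition of $\gamma$-subharmonicity together with the distributional-to-comparison bridge of Hervé--Hervé and Littman. Your approach is more self-contained (it effectively reproves the needed fragment of the Harvey--Lawson equivalence) at the cost of being longer. Two small points you should make explicit to be fully airtight: first, when invoking the maximum principle you need to compare $\psi+h$, not its usc regularization, with the constant $\gamma$-harmonic function $0$; this is legitimate because $\psi+h=u+g$ with $u\in SH_\gamma$ and $g$ of class ${\mathcal C}^2$, so $\psi+h$ coincides everywhere (not just a.e.) with its regularization. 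Second, the boundary inequality should be stated as $\limsup_{z\to\partial B}(\psi+h)(z)\leq -\alpha r^2<0$, which is what the comparison definition requires; this follows from $\psi\leq -\alpha|z|^2$ and $h(z)=\mu(r^2-|z|^2)$.
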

  
  \begin{proof}
  	Fix $a>1$. Let $\rho$ be a quadratic function such that $dd^c \rho= a \omega(x_0)$. By continuity of $\omega$, we have $dd^c \rho \geq \omega$ in a small ball $B$ around $x_0$. The function $\rho+\varphi$ is then $\gamma$-subharmonic in $B$ and the function $\rho+\varphi -(\rho+\chi)$ attains a local maximum at $x_0$. It thus follows that, at $x_0$, 
  	\[
  	(a \omega+dd^c \chi)\wedge \gamma^{n-1}= dd^c (\rho+ \chi) \wedge \gamma^{n-1}\geq 0.
  	\] 
 The conclusion follows by letting $a\searrow 1$.
  \end{proof}

  Fix an integer $1\leq k\leq n$, and let $\Gamma_k(\omega_X)$ denote the set of all $(1,1)$-forms $\alpha$ such that 
  \[
  \alpha^j \wedge \omega_X^{n-j} > 0, \; j=1,...,k. 
  \]
  
By G{\aa}rding's inequality \cite{Gar59}, if $\alpha, \alpha_1,...,\alpha_{k-1} \in \Gamma_k(\omega_X)$ then
  \[
  \alpha \wedge \alpha_1\wedge ... \wedge \alpha_{k-1} \wedge \omega_X^{n-k} > 0. 
  \]
  In particular, $\alpha_1\wedge ... \wedge \alpha_{k-1} \wedge \omega_X^{n-k}$ is a strictly positive $(n-1,n-1)$-form on $X$. By \cite{Mic82}, one can write $\alpha_1\wedge ... \wedge \alpha_{k-1} \wedge \omega_X^{n-k}= \gamma^{n-1}$, for some hermitian form $\gamma$. We let $F_k(\omega_X)$ denote the set of all such hermitian forms.  
  
  \begin{definition}
  	A function $\f: U \rightarrow \R \cup \{-\infty\}$ is $(\omega, \omega_X,k)$-subharmonic if it is $(\omega,\gamma)$-subharmonic in $U$ for all $\gamma\in F_k(\omega_X)$. 
  	
  	We let $SH_{\omega_X}(U,\omega,k)$ denote the set of all $(\omega, \omega_X,k)$-subharmonic functions on $U$ which are locally integrable in $U$.
  \end{definition}
  
    If $u$ is of class ${\mathcal C}^2$ in $U$ then, by \cite{Gar59}, $u$ is $(\omega,\omega_X,k)$-subharmonic if and only if $\omega+dd^c u \in \Gamma_k(\omega_X)$.
We refer the reader to \cite{Blo05} and \cite[Section 2.3]{KN16} for basic properties of  $(\omega,\omega_X,k)$-sh functions; they easily extend to our    context.

\smallskip

  Since $\omega_X$ is fixed once and for all in this note, we
 let $SH(X,\omega,k)$ denote the set of $(\omega,\omega_X,k)$-sh functions;  they form a nested sequence
 $$
 SH(X,\omega)  \supset \cdots SH(X,\omega,k-1)  \supset SH(X,\omega,k) \supset \cdots \supset 
PSH(X,\omega),
 $$
 where $SH(X,\omega)=SH(X,\omega,1)$ is the set of $\omega$-sh functions, while
  $PSH(X,\omega)$ $=SH(X,\omega,n)$
 is the set of $\omega$-psh functions, for which $\omega+dd^c \f \geq 0$ is a positive
 current of bidegree $(1,1)$ (see \cite[Chapter 8]{GZbook}).
 
 \smallskip
 
Observe that $SH(X,\omega,k) \subset SH(X,\omega',k)$ if $\omega \leq \omega'$. 
 Since $\omega\leq A \omega_X$ for some $A>0$, any $(\omega, \omega_X, k)$-sh function is 
 also $(A\omega_X,1)$-sh. We thus have the following compactness result 
 (see \cite[Lemma 3.3]{KN16}, \cite[Section 9]{GN18}). 
 
 \begin{prop} \label{pro:integrability}
 The set of normalized $(\omega,\omega_X,k)$-subharmonic functions is compact in the $L^1$-topology. In particular, there exists $C>0$ such that for all $\f \in SH(X,\omega,k)$ normalized by 
 $\sup_X \f=0$,
 $$
 \int_X (-\f) \omega_X^n \leq C.
 $$
 \end{prop}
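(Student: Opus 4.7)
The plan is to reduce to the well-understood case $k=1$ of quasi-subharmonic functions on $(X,\omega_X)$ and then invoke known compactness results for that class.

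First I would check that $\omega_X$ itself belongs to $F_k(\omega_X)$: taking $\alpha_1=\cdots=\alpha_{k-1}=\omega_X\in\Gamma_k(\omega_X)$ in the definition produces $\omega_X^{k-1}\wedge \omega_X^{n-k}=\omega_X^{n-1}$, which identifies $\omega_X$ as an element of $F_k(\omega_X)$. Consequently every $\varphi\in SH(X,\omega,k)$ satisfies $(\omega+dd^c\varphi)\wedge \omega_X^{n-1}\geq 0$ as a current on $X$. Choosing $A>0$ with $A\omega_X-\omega\geq 0$ and adding the nonnegative current $(A\omega_X-\omega)\wedge \omega_X^{n-1}$ yields
\[
(A\omega_X+dd^c\varphi)\wedge \omega_X^{n-1}\geq 0,
\]
so that $\varphi$ is a quasi-subharmonic function in the classical sense, i.e.\ $\varphi\in SH(X,A\omega_X,1)$. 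This makes the embedding mentioned just before the proposition statement completely precise.

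Second, I would invoke the classical $L^1$-compactness of the normalized set of quasi-subharmonic functions on a compact Hermitian manifold. Locally any such $\varphi$ is of the form $u+\rho$ with $u$ subharmonic for the Laplacian of $\omega_X$ and $\rho$ smooth, so the sub-mean value inequality applied on a finite cover of $X$ by coordinate balls gives the uniform bound
\[
\int_X(-\varphi)\,\omega_X^n\leq C
\]
whenever $\sup_X\varphi=0$; a Hartogs-type extraction then upgrades this bound to $L^1$-compactness of the normalized set. This is precisely the content of \cite[Lemma 3.3]{KN16}, see also \cite[Section 9]{GN18}; this is the only nontrivial technical input.

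Finally, to transfer compactness back to the smaller class, I would observe that $SH(X,\omega,k)$ is $L^1$-closed: for every fixed $\gamma\in F_k(\omega_X)$, nonnegativity of the current $(\omega+dd^c\varphi)\wedge \gamma^{n-1}$ is preserved under $L^1$ limits by weak convergence of currents, so $SH(X,\omega,k)=\bigcap_{\gamma\in F_k(\omega_X)}SH_{\omega_X}(X,\omega,\gamma)$ is an intersection of $L^1$-closed subsets of the compact set of normalized elements of $SH(X,A\omega_X,1)$, hence itself compact. The uniform $L^1$ bound stated in the proposition then follows either directly from the bound above (since $SH(X,\omega,k)\subset SH(X,A\omega_X,1)$) or from the continuity of the functional $\varphi\mapsto \int_X(-\varphi)\,\omega_X^n$ on the now-established compact set.
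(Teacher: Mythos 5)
Your proposal is correct and follows essentially the same route the paper takes: observe that $\omega_X\in F_k(\omega_X)$ forces every $\varphi\in SH(X,\omega,k)$ to be $(A\omega_X,\omega_X,1)$-sh (the paper phrases this via the nested chain of classes and $\omega\leq A\omega_X$), then invoke the classical $L^1$-compactness for that larger class as in \cite[Lemma 3.3]{KN16} and \cite[Section 9]{GN18}. The paper leaves the proof as a one-line observation, so your proposal fills in the same details — including the closedness under $L^1$-limits, which is implicit in the paper's statement.
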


 \subsubsection{Smooth approximation}

 \begin{theorem}\label{thm: approximation}
 If $\omega$ is hermitian, any $(\omega,k)$-subharmonic function  is the pointwise limit
 of a  decreasing sequence of smooth $(\omega,k)$-subharmonic functions.
 \end{theorem}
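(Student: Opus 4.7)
The plan is to regularize $\varphi$ by local convolution in coordinate charts and then patch the pieces together using Demailly's regularized maximum, taking advantage of the fact that, because $\omega$ is a hermitian form, any small loss of positivity produced by the regularization can be absorbed into $\omega$ itself.

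First I would fix a finite cover of $X$ by coordinate balls $U_\alpha \Subset X$ and, for each $\alpha$ and each small $\epsilon>0$, set $\varphi_\epsilon^{(\alpha)} := \varphi * \chi_\epsilon$, where $\chi_\epsilon$ is a standard smooth radial mollifier in the local coordinates. On a slightly smaller ball $U_\alpha' \Subset U_\alpha$, this gives a smooth family that is decreasing in $\epsilon$ and converges pointwise to $\varphi$. The next step is to quantify the loss of $(\omega,\omega_X,k)$-subharmonicity produced by convolution. Since the matrix coefficients of $\omega$ and $\omega_X$ are smooth, they vary by $O(\epsilon)$ across the support of $\chi_\epsilon$, so for every $\gamma\in F_k(\omega_X)$ and every $z\in U_\alpha'$ one gets $(\omega(z)+dd^c\varphi_\epsilon^{(\alpha)}(z))\wedge\gamma^{n-1} \geq -C_\alpha\epsilon\,\omega_X^n$. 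Because $\omega$ is hermitian, one may fix $\delta_0>0$ with $\omega\geq 2\delta_0\omega_X$ on $X$, and then adding a quadratic correction $\theta_\epsilon^{(\alpha)} = A_\alpha\epsilon|z|^2$ with $A_\alpha$ large enough converts $\tilde\varphi_\epsilon^{(\alpha)}:=\varphi_\epsilon^{(\alpha)}+\theta_\epsilon^{(\alpha)}$ into a genuinely $(\omega,\omega_X,k)$-subharmonic function on $U_\alpha'$, at the cost of an $O(\epsilon)$ perturbation in sup norm (which can be compensated by subtracting a global $O(\epsilon)$ constant, keeping monotonicity in $\epsilon$).

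To globalize, I would use the regularized maximum $\widetilde M_\eta$ of Demailly, applied to the local smooth functions $\tilde\varphi_\epsilon^{(\alpha)}$ (each extended suitably, with additive shifts chosen so that on each overlap the correct candidate dominates). Since $\widetilde M_\eta$ is a smooth convex combination that agrees with the ordinary maximum outside a gap of width $\eta$, and since the cone $\Gamma_k(\omega_X)$ is convex, the class $SH(X,\omega,k)$ is stable under $\widetilde M_\eta$; this is most cleanly seen through the viscosity characterization behind Lemma \ref{lem:max principle linear}, by checking the inequality on smooth test functions touching the max from above. One obtains a global smooth $(\omega,\omega_X,k)$-sh function $\Phi_\epsilon$, and by choosing a sequence $\epsilon_j\searrow 0$ together with compatible additive shifts, the resulting $\Phi_{\epsilon_j}$ can be arranged to form a decreasing sequence converging pointwise to $\varphi$.

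The hard part will be Step 3, the gluing: one must verify that the regularized max (or whatever patching procedure is used) genuinely preserves the $(\omega,\omega_X,k)$-subharmonicity condition \emph{globally} and that the resulting sequence is honestly monotone decreasing, not merely eventually so. The stability under $\widetilde M_\eta$ relies essentially on the convexity of the G\aa rding cone $\Gamma_k(\omega_X)$ together with the viscosity formulation, and the monotonicity of the sequence requires an inductive choice of mollification parameters and additive constants compatible with the transition between charts. Once these technicalities are in place, the pointwise convergence $\Phi_{\epsilon_j}\searrow\varphi$ follows from the pointwise convergence of the local convolutions and the fact that the regularized max of decreasing families is itself decreasing.
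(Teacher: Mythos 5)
Your approach (local convolution in charts, then Demailly's regularized maximum to glue) is genuinely different from the paper's. The paper works globally: starting from a smooth majorant $u$ of $\varphi$ it solves the penalized Hessian equations $(\omega+dd^c\varphi_j)^k\wedge\omega_X^{n-k}=e^{j(\varphi_j-u)}f\,\omega_X^n$ (using Sz\'ekelyhidi's a~priori estimates), proves via Lemma~\ref{lem:max principle linear} and G{\aa}rding's inequality that $\varphi_j\to P_{\omega,k}(u)$ uniformly with an explicit $O(\log j/j)$ rate, and then invokes the argument of Lu--Nguyen / Ko{\l}odziej--Nguyen to pass from envelopes of smooth functions to general $(\omega,k)$-sh functions. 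This avoids all local-to-global issues at the cost of relying on solvability of the smooth Hessian equation.

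The central gap in your proposal is the claim that the local convolutions $\varphi*\chi_\epsilon$ form a \emph{decreasing} family. For a plurisubharmonic, or more generally Euclidean-subharmonic, function this follows from the sub-mean-value inequality, but a $(\omega,\omega_X,k)$-subharmonic function is in general \emph{not} subharmonic for the flat Laplacian in the chart: membership in $\Gamma_k(\omega_X)$ only controls the $\omega_X$-trace of $\omega+dd^c\varphi$, and when $\omega_X$ is far from Euclidean this gives no lower bound on $\Delta_{\rm eucl}\varphi$. Concretely, in $\C^2$ with $\omega_X=i\,dz_1\wedge d\bar z_1+iM^2\,dz_2\wedge d\bar z_2$ the quadratic $\varphi=|z_1|^2-M^2|z_2|^2$ is $\omega_X$-harmonic (hence $(\omega,\omega_X,1)$-sh for any hermitian $\omega$) yet strictly superharmonic for the standard Laplacian, so $\varphi*\chi_\epsilon<\varphi$ and the family is \emph{increasing}, not decreasing. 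Replacing the mollifier by an $\omega_X$-adapted averaging, or switching to a Kiselman/Demailly-type regularization, would be needed here, and none of these is a routine modification. A secondary, lesser concern is that the gluing with additive shifts presumes some uniform control (essentially continuity or boundedness of $\varphi$) so that the local approximants nearly agree on chart overlaps, whereas the theorem is stated for general $(\omega,k)$-sh functions that may take the value $-\infty$. The stability of the G{\aa}rding cone under the regularized maximum, which you flag as the hard part, is in fact the sound part of your sketch: $\Gamma_k(\omega_X)$ is convex and stable under adding positive $(1,1)$-forms, so the viscosity argument you outline via Lemma~\ref{lem:max principle linear} does go through.
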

 
This result is a slight extension of  \cite[Lemma 3.20]{KN16}, which itself extends  \cite[Theorem 1.2]{LN15} from the K\"ahler to the hermitian setting.
We propose below an alternative and more direct proof.
 
 \begin{proof}
 	Let $u$ be a smooth function on $X$ and let $f$ be a smooth positive density such that 
 	\[
 	(\omega+dd^c u)^k \wedge \omega_X^{n-k} \leq f \omega_X^n. 
 	\]
 	For $j\geq 1$, we solve the complex Hessian equation 
 	\[
 	(\omega +dd^c \varphi_j)^k \wedge \omega_X^{n-k} = e^{j (\varphi_j-u)} f\omega_X^n. 
 	\]
 	As shown in \cite{KN16}, the estimates established in \cite{Szek18} 
 	provide a smooth solution $\varphi_j$ which is $(\omega,k)$-subharmonic on $X$. 
 	We want to prove that $\varphi_j$ converges uniformly to $P_{\omega,k}(u)$,  
 	the largest $(\omega,k)$-subharmonic function lying below $u$. 
 	The theorem follows from this fact, as 
 	shown in \cite{LN15,KN16}.
 	
 	The classical maximum principle ensures that $\varphi_j\leq u$,
 	 hence $\varphi_j \leq P_{\omega,k}(u)$. 
 	Fix now $\varphi\in SH(X,\omega,k)$ such that $\varphi\leq u$.  Let $x_0\in X$ be a point where the function $\varphi -(1+1/j)\varphi_j$ attains its maximum on $X$. By definition $\varphi$ belongs to $SH_{\gamma}(X,\omega)$, where $\gamma$ is the hermitian form such that 
 	\[
 	\gamma^{n-1} = (\omega+dd^c \varphi_j)^{k-1}\wedge \omega_X^{n-k}. 
 	\]
 	From now on the computations are made at $x_0$.
 	It follows from Lemma \ref{lem:max principle linear} that
 	\[
 	(\omega+(1+1/j) dd^c \varphi_j)\wedge \gamma^{n-1}\geq 0. 
 	\]
 	From this and G{\aa}rding's inequality \cite{Gar59} we obtain
 	\begin{flalign*}
 		\frac{j+1}{j} (\omega+dd^c \varphi_j)^k\wedge \omega_X^{n-k} &\geq \frac{1}{j} \omega \wedge  (\omega+dd^c \varphi_j)^{k-1}\wedge \omega_X^{n-k} \\
 		&\geq \frac{1}{j} e^{\frac{j(k-1)}{k} (\varphi_j-u)} f^{(k-1)/k} g^{1/k} \omega_X^n,
 	\end{flalign*}
 	where $g\omega_X^n = \omega^k \wedge \omega_X^{n-k}$. Let $b\in (0,1)$ be a positive constant such that $g\geq b^k f$ on $X$. The above inequality yields
 	\[
 	(1+j)e^{j(\varphi_j-u)/k} \geq b,
 	\]
 	hence
 	\[
 	\varphi_j - u \geq \frac{k (\log b - \log (1+j))}{j}.
 	\]
 	Thus, for some uniform constant $C\geq 1$ we have 
 	\[
 	\left(1+\frac{1}{j}\right)\varphi_j - u \geq \left(1+\frac{1}{j}\right)(\varphi_j-u) + \frac{u}{j} \geq  -C \frac{\log j}{j}. 
 	\]
 	Since $\varphi\leq u$, we also obtain
 	\[
 	\varphi-\left(1+\frac{1}{j}\right) \varphi_j  \leq  C \frac{\log j}{j}. 
 	\]
 	
 	This inequality holds on $X$, as  	$\varphi-(1+1/j) \varphi_j$ attains its global maximum at $x_0$.
 	 Since this is true for all $\varphi \in SH(X,\omega,k)$ lying below $u$, we obtain
 	\[
 	P_{\omega,k}(u) -C' \frac{\log j}{j}  \leq  \varphi_j  \leq P_{\omega,k}(u),
 	\] 	
 	for a uniform constant $C'$,
 	proving that $\varphi_j$ converges uniformly to $P_{\omega,k}(u)$.   
 	 \end{proof}

 \subsection{The Hessian operator} \label{sec:defoperator}
 
 \subsubsection{Hermitian forms}
 We assume in this subsection that $\omega$ is a hermitian form.
 For $(\omega,\omega_X,k)$-subharmonic functions $\varphi$ of 
 class ${\mathcal C}^2$, the complex Hessian operator is defined by 
 \[
  H_k(\varphi) = (\omega+dd^c \f)^k \wedge \omega_X^{n-k}.
 \]
By Theorem \ref{thm: approximation}, one can pointwise approximate from above any $(\omega,\omega_X,k)$-sh function $\f$ by a decreasing sequence of smooth $(\omega,\omega_X,k)$-sh functions $\f_j$. 

 If $\varphi$ is moreover continuous on $X$, then the convergence is uniform, and it was shown in \cite{KN16} that the sequence of Hessian measures $H_k(\varphi_j)$ weakly converges to a unique positive Radon measure, independent of the approximants. 
 One sets
 $$
  H_k(\f):=\lim_{j \rightarrow +\infty} H_k(\f_j) := (\omega+dd^c \f)^k \wedge \omega_X^{n-k}.
 $$
 It is likely that one can define these operators  for $(\omega,\omega_X,k)$-sh functions
 that are merely bounded. This is indeed the case when $\omega_X$ is closed, as one can proceed by induction and use integration by parts following Bedford-Taylor's construction \cite{BT76, BT82}. 
 The situation is however delicate in the general hermitian setting, and more subtle than for the
 Monge-Amp\`ere operator.
 
 \smallskip
 
 Among the basic properties enjoyed by this operator are the following:
 \begin{itemize}
 \item $\f \mapsto H_k(\f)$ is continuous for the uniform topology;
 \item if $u_1,\ldots, u_k \in \mathcal{SH}(X,\omega,k)$ are continuous, the mixed Hessian operators
 $(\omega+dd^c u_1) \wedge \cdots \wedge (\omega+dd^c u_k) \wedge \omega_X^{n-k}$ are similarly  defined and continuous;
 \item if $u_1,\ldots, u_k$ are smooth 
and $(\omega+dd^c u_j)^k \wedge \omega_X^{n-k}=f_j dV_X$ then
\begin{equation} \label{eq:mixed}
(\omega+dd^c u_1) \wedge \cdots \wedge (\omega+dd^c u_k) \wedge \omega_X^{n-k}
 \geq (f_1 \cdots f_k)^{1/k} dV_X.
\end{equation}
 \end{itemize}
These are a consequence of inequalities due to G{\aa}rding. 

\subsubsection{Big forms}

We now no longer assume that $\omega$ is positive, but rather that it is semi-positive and big. Fix a quasi-plurisubharmonic function $\rho: X \rightarrow \mathbb{R}\cup \{-\infty\}$ with analytic singularities such that $\omega+dd^c \rho \geq \delta \omega_X$, for some constant $\delta>0$. 
We let $\Omega$ denote the Zariski open set where $\rho$ is smooth.

If  $\varphi\in \mathcal{SH}(X,\omega,k) \cap L^{\infty}(X)$ is continuous in $\Omega$,
it follows from \cite[Proposition 2.9]{KN16} that
for each smooth form $\gamma$, the mixed product
 $(\omega_X+dd^c \varphi)^j \wedge \gamma^{l-j} \wedge \omega_X^{n-l}$
  is a well-defined Radon measure in $\Omega$ with finite mass.  

\begin{defi}
For $\varphi\in \mathcal{SH}(X,\omega,k) \cap L^{\infty}(X) \cap {\mathcal C}^0(\Omega)$ we set
$$
H_k(\f):= 
\sum_{j=0}^k \binom{k}{j} (\omega_X+dd^c \varphi)^j \wedge (\omega-\omega_X)^{k-j}\wedge\omega_X^{n-k}. 
$$
\end{defi}

This operator is well-defined in $\Omega$ with finite total mass, 
so we consider its trivial extension to $X$.
 The definition is motivated by the identity
\begin{flalign*}
	(\omega+dd^c \varphi)^k \wedge \omega_X^{n-k} &= (\omega_X +dd^c \varphi + \omega-\omega_X)^k \wedge\omega_X^{n-k}\\
	&= \sum_{j=0}^k \binom{k}{j} (\omega_X+dd^c \varphi)^j \wedge (\omega-\omega_X)^{k-j}\wedge\omega_X^{n-k}. 
\end{flalign*}

The operator $H_k$ is continuous along sequences that converge locally uniformly in $\Omega$
\cite[Proposition 2.11]{KN16}. The following result follows.

\begin{prop}
If $\varphi\in \mathcal{SH}(X,\omega,k) \cap L^{\infty}(X) \cap {\mathcal C}^0(\Omega)$,
then $H_k(\f)$ is a {positive} Radon measure.
\end{prop}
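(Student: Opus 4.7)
The plan is a two-stage approximation that reduces positivity of $H_k(\varphi)$ to the smooth case, where it follows from G{\aa}rding's inequality. Since $H_k(\varphi)$ is defined as the trivial extension to $X$ of a signed Radon measure of finite mass on $\Omega$, it suffices to prove positivity on the open set $\Omega$.

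The first step is to perturb $\omega$ into a hermitian form. For $\epsilon>0$, set $\omega_\epsilon := \omega + \epsilon\,\omega_X$; this is hermitian on $X$, and from $\omega \leq \omega_\epsilon$ one obtains $\varphi \in \mathcal{SH}(X,\omega_\epsilon,k)\cap L^\infty(X)\cap \mathcal{C}^0(\Omega)$. Applying Theorem~\ref{thm: approximation} with the hermitian form $\omega_\epsilon$ yields a decreasing sequence of smooth $(\omega_\epsilon,\omega_X,k)$-subharmonic functions $\varphi_j^\epsilon \searrow \varphi$ on $X$. On each compact $K\subset\Omega$, the continuity of $\varphi$ together with Dini's theorem upgrades this to uniform convergence on $K$. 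For each smooth approximant, G{\aa}rding's inequality gives the pointwise positivity
$$
(\omega_\epsilon + dd^c\varphi_j^\epsilon)^k \wedge \omega_X^{n-k} \geq 0 \qquad \text{on } X.
$$

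The second step is to pass to the two limits. Letting $j\to\infty$, the continuity of the mixed Hessian operator along locally uniformly convergent sequences in $\Omega$ (\cite[Prop.~2.11]{KN16}) shows that the positive measures $(\omega_\epsilon + dd^c\varphi_j^\epsilon)^k\wedge\omega_X^{n-k}$ converge weakly on $\Omega$ to the Radon measure $\tilde H_{k,\epsilon}(\varphi)$ defined by the analogous binomial expansion with $\omega$ replaced by $\omega_\epsilon$. Positivity is preserved under weak limits, so $\tilde H_{k,\epsilon}(\varphi) \geq 0$ on $\Omega$. Letting next $\epsilon\to 0$, the expansion
$$
\tilde H_{k,\epsilon}(\varphi) = \sum_{j=0}^{k}\binom{k}{j}(\omega_X + dd^c\varphi)^j \wedge (\omega_\epsilon-\omega_X)^{k-j}\wedge\omega_X^{n-k}
$$
depends polynomially on $\epsilon$ at fixed $\varphi$, and each summand is a Radon measure on $\Omega$ by \cite[Prop.~2.9]{KN16}; therefore $\tilde H_{k,\epsilon}(\varphi)\to H_k(\varphi)$ weakly on $\Omega$, and the positivity is inherited by $H_k(\varphi)$. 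Trivial extension to $X$ then yields a positive Radon measure on all of $X$.

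The main obstacle is that $H_k(\varphi)$ is not \emph{a priori} a positive combination of positive currents: the factor $\omega-\omega_X$ appearing in the binomial expansion need not be positive, so positivity is a global cancellation phenomenon only visible after approximation. The perturbation $\omega\mapsto\omega_\epsilon$ precisely sidesteps this issue by moving to the hermitian regime in which Theorem~\ref{thm: approximation} applies; its compatibility with the subsequent $\epsilon\to 0$ limit rests on the fact that the individual mixed products on $\Omega$ are already bona fide Radon measures, so the polynomial dependence on $\epsilon$ carries positivity across the limit.
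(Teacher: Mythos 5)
Your proposal is correct and follows essentially the same approach as the paper: perturb $\omega$ to the hermitian form $\omega+\varepsilon\omega_X$, approximate $\varphi$ by smooth $(\omega+\varepsilon\omega_X,\omega_X,k)$-subharmonic functions via Theorem~\ref{thm: approximation}, invoke \cite[Proposition~2.11]{KN16} to pass positivity to the limit in $j$, then let $\varepsilon\to 0$. You merely spell out a few steps the paper leaves implicit (Dini's theorem for local uniform convergence, the polynomial dependence on $\varepsilon$ justifying the final limit via \cite[Proposition~2.9]{KN16}).
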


\begin{proof}
 Fix $\varepsilon>0$ and approximate $\varphi$ 
 -using Theorem \ref{thm: approximation}- by a decreasing sequence  $(\varphi_j)$ of $(\omega+\varepsilon\omega_X)$-sh functions. 
 Since $(\omega+\varepsilon \omega_X+dd^c \varphi_j)^k \wedge \omega_X^{n-k} \geq 0$,
 we infer from \cite[Proposition 2.11]{KN16} that 
 $(\omega+\varepsilon \omega_X+dd^c \varphi)^k \wedge \omega_X^{n-k} \geq 0$.
 The conclusion follows by letting $\varepsilon\to 0$, as we then obtain that $(\omega+dd^c \varphi)^k \wedge \omega_X^{n-k}\geq 0$.   
\end{proof}


\begin{lem} \label{lem:Demkey}
Let $\f,\p$ be $(\omega,\omega_X,k)$-sh functions which are continuous in $\Omega$
and bounded  on $X$. Then 
\[
H_k( \max(\varphi,\psi)) 
 \geq {\bf 1}_{\{\varphi>\psi\}} H_k(\f) + {\bf 1}_{\{\psi\geq \varphi\}} H_k(\p).
\]
In particular, if $\varphi\leq \psi$ then 
\[
 1_{\{\p = \f \}} H_k(\f) 
 \leq 1_{\{\p = \f \}} H_k(\p).
\]
\end{lem}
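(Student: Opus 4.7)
The plan is to adapt Demailly's classical max-principle trick to this degenerate hermitian setting. Since both sides are Radon measures trivially extended from $\Omega$, it suffices to establish the inequality as measures on $\Omega$. First I note that $\max(\varphi,\psi)$ is $(\omega,\omega_X,k)$-subharmonic, bounded on $X$, and continuous in $\Omega$, so $H_k(\max(\varphi,\psi))$ is a well-defined positive Radon measure.

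For each $\varepsilon > 0$ I would introduce the perturbed maximum $u_\varepsilon := \max(\varphi, \psi + \varepsilon)$, which lies in $\mathcal{SH}(X,\omega,k) \cap L^\infty(X) \cap \mathcal{C}^0(\Omega)$ and decreases uniformly to $\max(\varphi,\psi)$ on $X$ as $\varepsilon \searrow 0$. By continuity of $\varphi$ and $\psi$ in $\Omega$, the sets $U_\varepsilon := \{\varphi > \psi + \varepsilon\} \cap \Omega$ and $V_\varepsilon := \{\psi + \varepsilon > \varphi\} \cap \Omega$ are open, with $u_\varepsilon \equiv \varphi$ on $U_\varepsilon$ and $u_\varepsilon \equiv \psi + \varepsilon$ on $V_\varepsilon$. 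Invoking the local character of the Hessian operator for continuous $(\omega,\omega_X,k)$-subharmonic functions in $\Omega$ (a consequence of the definition via mixed products and the continuity of these products under locally uniform convergence, \cite[Proposition~2.11]{KN16}), together with $H_k(\psi + \varepsilon) = H_k(\psi)$, I obtain
$$
H_k(u_\varepsilon) \;\geq\; \mathbf{1}_{U_\varepsilon}\, H_k(\varphi) + \mathbf{1}_{V_\varepsilon}\, H_k(\psi) \;\geq\; \mathbf{1}_{U_\varepsilon}\, H_k(\varphi) + \mathbf{1}_{\{\psi\geq\varphi\}}\, H_k(\psi),
$$
the last step since $V_\varepsilon \supset \{\psi \geq \varphi\} \cap \Omega$.

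Letting $\varepsilon \searrow 0$, the continuity of $H_k$ along uniformly decreasing sequences gives $H_k(u_\varepsilon) \to H_k(\max(\varphi,\psi))$ weakly in $\Omega$, while the sets $U_\varepsilon$ increase to $\{\varphi > \psi\} \cap \Omega$. Testing against nonnegative continuous functions compactly supported in $\Omega$ and using monotone convergence delivers the first inequality. For the ``in particular'' statement, I would run the same argument perturbing $\varphi$ instead of $\psi$ to obtain the dual bound
$$
H_k(\max(\varphi,\psi)) \;\geq\; \mathbf{1}_{\{\varphi\geq\psi\}}\, H_k(\varphi) + \mathbf{1}_{\{\psi>\varphi\}}\, H_k(\psi);
$$
under the assumption $\varphi \leq \psi$ one has $\max(\varphi,\psi) = \psi$, and restricting to the contact set $\{\varphi = \psi\}$ yields the claimed comparison.

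The main obstacle I anticipate is securing the locality of $H_k$ in this big, non-closed setting, where the operator is not constructed inductively in the Bedford--Taylor sense. The idea is to reduce the required equalities $H_k(u_\varepsilon) = H_k(\varphi)$ on $U_\varepsilon$ and $H_k(u_\varepsilon) = H_k(\psi)$ on $V_\varepsilon$ to smooth $(\omega + \varepsilon'\omega_X)$-approximants on a compact exhaustion (via Theorem~\ref{thm: approximation} applied to the hermitian form $\omega + \varepsilon'\omega_X$ and letting $\varepsilon'\searrow 0$), where locality is transparent, and then pass to the limit using the continuity statement in \cite{KN16}.
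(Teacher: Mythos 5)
Your proof is correct and follows essentially the same route as the paper: perturb $\psi$ by $\varepsilon>0$ so that $\{\varphi>\psi+\varepsilon\}$ and $\{\psi+\varepsilon>\varphi\}$ are open in $\Omega$, use locality of $H_k$ on these open sets, and let $\varepsilon\searrow 0$ via \cite[Proposition 2.11]{KN16}. The paper's write-up is terser (it leaves the locality step implicit and dismisses the ``in particular'' as straightforward), whereas you correctly spell out that the second claim comes from the role-swapped version of the first inequality restricted to the contact set.
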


\begin{proof}
Fix $t>0$. 
Since $\{\varphi<\psi+t\}$ and $\{\psi+t<\varphi\}$ are open in $\Omega$, we have 
\begin{flalign*}
	H_k(\max(\varphi,\psi+t)) &\geq 
 {\bf 1}_{\{\varphi>\psi+t\}} (\omega+dd^c \varphi)^k \wedge\omega_X^{n-k} + {\bf 1}_{\{\psi+t> \varphi\}} (\omega+dd^c \psi)^k \wedge\omega_X^{n-k}\\
 &\geq  {\bf 1}_{\{\varphi>\psi+t\}} (\omega+dd^c \varphi)^k \wedge\omega_X^{n-k} + {\bf 1}_{\{\psi \geq \varphi\}} (\omega+dd^c \psi)^k \wedge\omega_X^{n-k}. 
\end{flalign*}
Letting $t\searrow 0$ we obtain the first statement of the lemma.
The second one follows straightforwardly from the first.
\end{proof}

 \subsubsection{$SH(X,\omega,k)$-envelopes}

We   consider two types of envelopes in this note.
 
 \begin{defi}
If $h:X \rightarrow \R$ is a continuous function, we set
 \[
 P_{\omega}(h):=\sup \left\{ u \in PSH(X,\omega), \, u \leq h \right\},
 \]
 and $P_{\omega,k}(h):=\sup \left\{ u \in SH(X,\omega,k), \, u \leq h \right\}$.
 \end{defi}

Here is a list of basic properties of these envelopes.
\begin{itemize}
\item The   measure $(\omega+dd^c P_{\omega}(h))^n$ is concentrated on the  set ${\mathcal C}=\{P_{\omega}(h)=h\}$.
\item The  complex Hessian  measure $(\omega+dd^c P_{\omega,k}(h))^k \wedge \omega_X^{n-k}$ is concentrated
 on  the contact set ${\mathcal C}_k=\{P_{\omega,k}(h)=h\}$.
\item If $h$ is ${\mathcal C}^{1,1}$-smooth and $\omega$ is hermitian, then 
$P_{\omega}(h)$ is ${\mathcal C}^{1,1}$-smooth and 
$$
(\omega+dd^c P_{\omega}(h))^n={\bf 1}_{{\mathcal C}} (\omega+dd^c h)^n.
$$
\item If $h$ is ${\mathcal C}^{1,1}$-smooth and $\omega$ is hermitian, then 
$P_{\omega,k}(h)$ is ${\mathcal C}^{1,1}$-smooth and 
$$
(\omega+dd^c P_{\omega,k}(h))^k \wedge \omega_X^{n-k}
={\bf 1}_{{\mathcal C_k}} (\omega+dd^c h)^k \wedge \omega_X^{n-k}.
$$
\end{itemize}

We refer the reader to \cite{Ber19,Tos18,GLZ19,CZ19,CM21} for the proof of these results,
as well as further information on these envelopes.

%
%
%
%

  \subsection{A priori estimates for the complex Monge-Amp\`ere operator}

  We are going to use the following a priori  estimates  established 
  in \cite{GL21}.
  
    \begin{theorem}  \label{thm:uniformHessian}
   Let $\omega,\omega_0$ be semi-positive and big $(1,1)$-forms such that $\omega \geq \omega_0$. 
   Fix $\delta>0$ and 
   $g \in L^{1+\delta}(dV_X)$ 
   a non-negative density.
If $\f \in PSH(X,\omega) \cap L^{\infty}(X)$ satisfies
 $(\omega+dd^c \f)^n \leq g dV_X$, then
 $$
 {\rm Osc}_X (\f) \leq  C
 $$
 for some  $C \in \R^+$ that depends on 
 $n,\delta,\omega_0$ and an upper bound on $||g||_{L^{1+\delta}}$.
 \end{theorem}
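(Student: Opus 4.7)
Normalize $\sup_X \f = 0$; the goal is to bound $M := -\inf_X \f$ from above by a constant $C = C(n, \delta, \omega_0, \|g\|_{L^{1+\delta}})$. Since $\omega_0$ is big, fix once and for all an $\omega_0$-psh function $\rho$ with analytic singularities such that $\omega_0 + dd^c \rho \geq \delta_0 \omega_X$ for some $\delta_0 > 0$, normalized by $\sup_X \rho = 0$. Because $\omega \geq \omega_0$, the same $\rho$ satisfies $\omega + dd^c \rho \geq \delta_0 \omega_X$; this is the only way $\omega$ enters through an a priori estimate that is uniform in $\omega$. The strategy is to convert the differential inequality $(\omega + dd^c \f)^n \leq g\, dV_X$ into an integral decay estimate for the distribution function of $\f$, via an auxiliary Monge--Amp\`ere equation solved against the strictly positive hermitian background $\omega_X$, to which the already available hermitian theory applies.

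For each parameter $s > 0$ I would solve on $(X, \omega_X)$ an auxiliary Monge--Amp\`ere equation of the form
\[
(\omega_X + dd^c \psi_s)^n \;=\; c_s\, \Phi_s(\f)\, g\, dV_X, \qquad \sup_X \psi_s = 0,
\]
where $c_s > 0$ normalizes total masses and $\Phi_s \geq 0$ encodes the sublevel information of $\f$ (for instance, a truncation $\Phi_s(\f) = \max(-\f - s, 0)^n$, or an exponential weight). Skoda's exponential integrability of the $\omega$-psh function $\f$, combined with H\"older's inequality and $g \in L^{1+\delta}$, guarantees that the right-hand side density belongs to $L^{1+\delta'}$ for some $\delta' > 0$ uniformly in $s$. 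Tosatti--Weinkove solvability of the hermitian Monge--Amp\`ere equation together with the classical $L^\infty$ estimates of Ko{\l}odziej--Nguyen on the fixed hermitian background then yield a bounded solution $\psi_s$ with $\|\psi_s\|_{L^\infty} \leq K$ independent of $s$.

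On the sublevel set $U_s := \{\f < -s\}$, I would compare $\f$ with a barrier of the form $B_s := -s + \lambda(\psi_s + \eta \rho)$ for parameters $\lambda, \eta > 0$ chosen so that $(\omega + dd^c B_s)^n \geq c'\, \Phi_s(\f)\, g\, dV_X$ on $U_s$; here $\omega + dd^c \rho \geq \delta_0 \omega_X$ is essential to guarantee that $B_s$ is $\omega$-psh, and G\aa rding's inequality is what converts the auxiliary Monge--Amp\`ere mass into the desired lower bound for $(\omega + dd^c B_s)^n$. The comparison principle applied to $\f$ and $B_s$ on $U_s$, together with $(\omega + dd^c \f)^n \leq g\, dV_X$, then produces a self-improving inequality of the form $t^n\, \mathrm{Vol}(U_{s+t}) \leq C_0\, \mathrm{Vol}(U_s)^{1+\beta}$ for some $\beta > 0$. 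A De Giorgi-type iteration on the distribution function of $\f$ closes the estimate and gives $U_{s_*} = \emptyset$ for an $s_*$ depending only on $n, \delta, \omega_0$ and $\|g\|_{L^{1+\delta}}$, so $\inf_X \f \geq -s_*$ as required.

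The main obstacle is the simultaneous degeneracy of $\omega$ (only semi-positive and big) and the non-K\"ahler nature of $\omega_X$. The former is addressed by first replacing $\omega$ with $\omega + \e \omega_X$ (which is hermitian), running the entire argument, and checking that every constant depends only on $\omega_0$ and the fixed $\rho$, never on $\e$, before letting $\e \to 0$. The latter produces torsion error terms in hermitian integration-by-parts and G\aa rding-type inequalities, which must be absorbed by the uniform lower bound $\delta_0 \omega_X$ coming from bigness; this is precisely why the final constant can depend on $\omega_0$ alone and not on any finer data of $\omega$.
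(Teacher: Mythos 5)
Your plan is a genuinely different route from the paper's. The paper's proof is a one-shot comparison, not an iteration: after normalizing $\sup_X \f=0$, it applies Skoda's integrability to get $e^{-\alpha\f}\in L^q$ for a large $q$ depending on $\delta$, sets $\tilde g:=e^{-\alpha\f}g\in L^{1+\delta/2}$ by H\"older, then invokes \cite[Lemma~3.3]{GL21} to produce a single bounded $\omega$-psh barrier $u$ with $(\omega+dd^c u)^n\geq c(\delta,\omega)\,\tilde g\, dV_X/\|\tilde g\|_{L^{1+\delta/2}}$. The translate $v=u-\alpha^{-1}\log\bigl[c(\delta,\omega)/\|\tilde g\|_{L^{1+\delta/2}}\bigr]$ then satisfies $e^{-\alpha v}(\omega+dd^c v)^n\geq e^{-\alpha\f}(\omega+dd^c\f)^n$, and the exponentially-weighted comparison principle \cite[Corollary~1.14]{GL21} gives $v\leq\f$ in one step. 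No family of sublevel sets, no auxiliary equation depending on $s$, no De Giorgi iteration, no $\e\to 0$ limit. What the paper's approach buys is brevity and uniformity: the dependence on $\omega$ enters only through a single application of \cite[Lemma~3.3]{GL21} against the big form $\omega$ itself, so there is no need to first work with $\omega+\e\omega_X$ and track $\e$-independence of constants.

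Your approach, in contrast, is in the spirit of Ko{\l}odziej--Nguyen and Guo--Phong--Tong: a family $\psi_s$ of auxiliary solutions on the positive background $\omega_X$ (via Tosatti--Weinkove), a barrier on each sublevel set, and a De Giorgi--type iteration. This is a legitimate alternative strategy, but as written there are two places where the argument is asserted rather than secured. First, the barrier $B_s=-s+\lambda(\psi_s+\eta\rho)$: to force $\omega+dd^c B_s\geq\mu(\omega_X+dd^c\psi_s)$ you need $\omega+\lambda\eta\,dd^c\rho\geq\mu\omega_X+(\mu-\lambda)dd^c\psi_s$, and with $\omega_0+dd^c\rho\geq\delta_0\omega_X$ for $\delta_0<1$ the natural choice $\eta=1$, $\lambda\leq 1$ only yields $\omega+\lambda(dd^c\rho+dd^c\psi_s)\geq\lambda\delta_0\omega_X+\lambda\,dd^c\psi_s$, which is not $\geq\lambda(\omega_X+dd^c\psi_s)$ unless $\delta_0\geq 1$. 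This is fixable (rescale the auxiliary background to $\delta_0\omega_X$ before applying Tosatti--Weinkove), but it must be done explicitly to keep the constants depending only on $\omega_0$. Second, and more seriously, the inequality $t^n\,\mathrm{Vol}(U_{s+t})\leq C_0\,\mathrm{Vol}(U_s)^{1+\beta}$ stated directly for Lebesgue volume does not follow immediately from a comparison on $U_s$; the standard Ko{\l}odziej argument produces a self-improving inequality in terms of the Monge--Amp\`ere capacity and then converts capacity to Lebesgue volume a posteriori using Skoda. Omitting capacity and working with $\mathrm{Vol}$ alone requires a separate justification, especially on a hermitian background where integration by parts produces torsion terms that are not obviously sign-definite on sublevel sets. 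If you flesh these two steps out (replace $\mathrm{Vol}$ by $\mathrm{Cap}$ or supply the missing volume--capacity comparison, and fix the barrier normalization), the plan should close; but the paper's route reaches the same conclusion more directly.
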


 Since this result is not explicitly stated   in \cite{GL21}, we
   briefly sketch its proof.
  
\begin{proof}
We normalize $\f$ so that $\sup_X \f=0$. It follows from
 Skoda's uniform integrability theorem 
\cite[Theorem 8.11]{GZbook} that for $0<\alpha$ small enough,
$$
\int_X e^{-\alpha \frac{(2+\delta)(1+\delta)}{\delta} \f} dV_X \leq C_{\alpha}
$$
is bounded from above independently of $\f$.
Setting $\tilde{g}=e^{-\alpha \f} g$,
H\"older inequality  then shows that 
 $\tilde{g}  \in L^{1+\delta/2}$ with
$$
||\tilde{g}||_{L^{1+\delta/2}} \leq C_{\alpha}^{1/q} ||g||_{L^{1+\delta}}^{1/p},
$$
where $p=\frac{1+\delta}{1+\delta/2}$ and $q=2+2/\delta$.
\cite[Lemma 3.3]{GL21} shows the existence of 
$c(\delta,\omega)>0$ and $u \in PSH(X,\omega)$ such that  $-1 \leq u \leq 0$ and
$$
(\omega+dd^c u)^n \geq c(\delta,\omega) \frac{\tilde{g} dV_X}{||\tilde{g}||_{L^{1+\delta/2}}}
\geq e^{\alpha u} c(\delta,\omega) \frac{\tilde{g} dV_X}{||\tilde{g}||_{L^{1+\delta/2}}}.
$$
The function $v:=u-\alpha^{-1} \log [c(\delta,\omega)/||\tilde{g}||_{L^{1+\delta/2}}] \in PSH(X,\omega)$
thus satisfies
$$
e^{-\alpha v} (\omega+dd^c v)^n = e^{-\alpha v}  (\omega+dd^c u)^n  \geq \tilde{g} dV_X
\geq e^{-\alpha \f} (\omega+dd^c \f)^n.
$$
The comparison principle \cite[Corollary 1.14]{GL21} finally ensures that  $v \leq \f$. 

This provides a uniform lower bound on $\f$, which only depends on
$\delta,\omega$ and an upper bound on $||g||_{L^{1+\delta}}$.
\end{proof}

\section{Domination principle}


Fix  $\omega$ a big form, $\rho$  an $\omega$-plurisubharmonic function with analytic singularities such that $\omega+dd^c \rho \geq  \delta \omega_X$, $\delta>0$,
and set $\Omega:= \{\rho>-\infty\}$.

\smallskip

We fix  a constant $B_1>0$ such that for all $x \in \Omega$,
	\[
	-B_1  \omega_{\rho}^2 \leq dd^c \omega_{\rho} \leq B_1\omega_{\rho}^2
	\; \; \text{ and } \; \; 
	-B_1  \omega_{\rho}^3 \leq d \omega_{\rho} \wedge d^c \omega_{\rho} \leq B_1\omega_{\rho}^3. 
	\]
     The existence of $B_1$ is clear since $d\omega=d\omega_{\rho}$,
$d^c\omega=d^c\omega_{\rho}$  and
 $$
 -B  \omega_{X}^2 \leq dd^c \omega \leq B\omega_{X}^2
\; \; \text{ and } \; \; 
 -B  \omega_{X}^3 \leq d \omega \wedge d^c \omega \leq B\omega_{X}^3
 $$
 for some $B>0$. 
 The following is an extension of \cite[Theorem 1.11]{GL21}.

	 \begin{theorem} \label{thm: non vanishing}
		Assume $\omega$ is big. If $u$ is a bounded 
		$(\omega,\omega_X,k)$-subharmonic function which is continuous in $\Omega$, then $\int_X (\omega+dd^c u)^k \wedge \omega_X^{n-k}>0$. 
 \end{theorem}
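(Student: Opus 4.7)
The plan is to deduce the statement from its Monge--Amp\`ere analogue \cite[Theorem 1.11]{GL21} (the $k=n$ case) by passing through the $\omega$-plurisubharmonic envelope of $u$.

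First I introduce $v := P_{\omega}(u)$, the largest $\omega$-psh function on $X$ lying below $u$. Since $u$ is bounded and upper semicontinuous, $v$ is a well-defined bounded $\omega$-psh function. Using the hypothesis that $u$ is continuous on the Zariski open set $\Omega$, together with the fact that $\omega+dd^c\rho$ is a genuine hermitian form there, a standard Walsh-type balayage argument performed locally in $\Omega$ shows that $v$ is continuous on $\Omega$. By construction $v\leq u$, with equality on the contact set $\mathcal{C}:=\{v=u\}$.

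Since $v\in \PSH(X,\omega)\subset SH(X,\omega,k)$, both $v,u$ belong to $SH(X,\omega,k)\cap L^{\infty}(X)\cap \mathcal{C}^0(\Omega)$, so Lemma \ref{lem:Demkey} applied to $v\leq u$ gives the domination
\[
\mathbf{1}_{\mathcal{C}}\,H_k(u)\;\geq\;\mathbf{1}_{\mathcal{C}}\,H_k(v).
\]
On the other hand, \cite[Theorem 1.11]{GL21} yields the non-vanishing $\int_X (\omega+dd^c v)^n>0$, and the envelope orthogonality property recalled in Section~\ref{sec:defoperator} ensures that $(\omega+dd^c v)^n$ is concentrated on $\mathcal{C}$. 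It therefore suffices to prove the mixed-degree non-vanishing
\[
\int_{\mathcal{C}}(\omega+dd^c v)^k\wedge \omega_X^{n-k}\;>\;0.
\]

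The remaining step is the passage from positivity of the full Monge--Amp\`ere mass of $v$ to positivity of its mixed $k$-Hessian mass. At the smooth level this is supplied by G{\aa}rding's geometric-arithmetic mean inequality for mixed discriminants: for any positive semidefinite $(1,1)$-form $\alpha$ one has, as densities with respect to $\omega_X^n$,
\[
\frac{\alpha^k\wedge \omega_X^{n-k}}{\omega_X^n}\;\geq\; c_{n,k}\left(\frac{\alpha^n}{\omega_X^n}\right)^{k/n},
\]
with $c_{n,k}>0$ depending only on $n,k$. To handle the fact that $v$ is only bounded — so $(\omega+dd^c v)^n$ may a priori have a singular part — I would approximate $v$ from above on $\Omega$ by smooth $(\omega+\varepsilon\omega_X)$-psh functions via Theorem \ref{thm: approximation} (applied to the hermitian form $\omega_{\varepsilon}:=\omega+\varepsilon\omega_X$), apply G{\aa}rding to the smooth approximants, and pass to the limit using the continuity statements for $H_k$ from Section~\ref{sec:defoperator} along sequences that converge uniformly in $\Omega$, before finally letting $\varepsilon\searrow 0$.

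The main obstacle will be precisely this last step: making the G{\aa}rding-type comparison between $(\omega+dd^c v)^n$ and $(\omega+dd^c v)^k\wedge \omega_X^{n-k}$ rigorous in a setting where the Monge--Amp\`ere measure can be singular and the underlying function is merely bounded. The bigness assumption on $\omega$ — which localises the whole argument to the Zariski open set $\Omega$ where every current becomes smooth after regularisation — together with the envelope orthogonality that concentrates $(\omega+dd^c v)^n$ on $\mathcal{C}$, are what ultimately make the limit procedure produce a strictly positive lower bound.
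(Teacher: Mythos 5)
The proposal takes a genuinely different route from the paper, and unfortunately it has a gap at exactly the place you flag as the main obstacle; the gap is not merely technical but defeats the strategy.

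The heart of the problem is the passage from positivity of the Monge--Amp\`ere mass of $v=P_{\omega}(u)$ to positivity of its mixed Hessian mass via G{\aa}rding's inequality. G{\aa}rding gives, for smooth $(\omega,\omega_X,k)$-subharmonic approximants $v_j$ with $(\omega+dd^c v_j)^n = g_j\,\omega_X^n$, the pointwise bound
\[
(\omega+dd^c v_j)^k\wedge \omega_X^{n-k}\;\geq\; c_{n,k}\, g_j^{k/n}\,\omega_X^n.
\]
The function $t\mapsto t^{k/n}$ is \emph{concave} (sublinear), and the quantity $\int_X g_j^{k/n}\,\omega_X^n$ is \emph{not} lower semicontinuous along weak convergence of $g_j\,\omega_X^n$. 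Concretely, if the Monge--Amp\`ere measures concentrate — e.g.\ $g_j\,\omega_X^n \to$ a singular measure of fixed total mass — then $\int g_j^{k/n}$ can tend to $0$. Since $v$ here is only bounded (the input $u$ is merely continuous on $\Omega$, not $\mathcal{C}^{1,1}$, so one cannot invoke $\mathcal{C}^{1,1}$-regularity of the envelope), $(\omega+dd^c v)^n$ may have a non-trivial singular part and the limit of the G{\aa}rding lower bounds is allowed to vanish. In other words, positivity of $\int_X(\omega+dd^c v)^n$ does not imply positivity of $\int_{\mathcal{C}}(\omega+dd^c v)^k\wedge\omega_X^{n-k}$ by this argument, and I do not see how to repair it while staying within this strategy. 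A secondary, smaller issue is that the envelope-orthogonality statement is recalled in the paper for continuous $h$ on $X$ (or $\mathcal{C}^{1,1}$ $h$ with $\omega$ hermitian); you need it for $u$ bounded on $X$ and continuous only on a Zariski open set, with $\omega$ merely big, so that would also require a separate justification.

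For comparison, the paper avoids envelopes and G{\aa}rding entirely. It sets $m=\inf_{\Omega}(u-\rho)$ and, for small $s>0$, $\phi=\max(u,\rho+m+s)$. On the non-empty open set $U=\{u<\rho+m+s\}\Subset\Omega$ one has $\omega_{\phi}=\omega+dd^c\rho\geq\delta\omega_X$, so $\int_U\omega_{\phi}^k\wedge\omega_X^{n-k}>0$ a priori. An integration-by-parts argument (controlling the hermitian error terms via \cite[Cor.~2.4]{KN16}) then shows that if $\int_X\omega_u^k\wedge\omega_X^{n-k}=0$ one would force $\int_U\omega_{\phi}^k\wedge\omega_X^{n-k}=0$, a contradiction. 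This works directly at degree $k$, uses the bigness of $\omega$ through $\rho$ rather than through the regularity theory of envelopes, and avoids any comparison between measures of different degrees — which is exactly where the proposed argument breaks down.
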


 \begin{proof}
 Fix $\rho$ and $B_1$ as above. We set 
 $m=\inf_{\Omega}(u-\rho)$. For $s>0$ we set 
 $$
 \phi = \max(u,\rho+m+s)
 \; \; \text{ and } \; \; 
 U:= \{u<\phi\}= \{u<\rho+m+s\}.
 $$
  Observe that $U$ is relatively compact in 
  $\Omega$, and it is non empty for all $s>0$, by definition of $m$.
  It follows from \cite[Corollary 2.4]{KN16} that, for any $0\leq j\leq k-1$, 
 \[
 dd^c (\omega_u^j \wedge \omega_{\phi}^{k-j-1} \wedge \omega_X^{n-k}) \leq C_1 \sum_{m=k-2}^{k-1}\sum_{l=0}^m  \omega_u^l \wedge \omega_{\phi}^{m-l} \wedge \omega_X^{n-m},
 \]
 where $C_1$ is a uniform constant. 
 On the open set $U$ since $\omega_{\phi}  \geq \delta \omega_X$, we thus have 
  \begin{equation}\label{eq: non vanishing}
 dd^c (\omega_u^j \wedge \omega_{\phi}^{k-j-1} \wedge \omega_X^{n-k}) \leq C_2 \sum_{l=0}^{k}  \omega_u^l \wedge \omega_{\phi}^{k-l} \wedge \omega_X^{n-k}.  	
  \end{equation}
 Since $u\leq \phi$, Lemma \ref{lem:Demkey} ensures that
	\[
	{\bf 1}_{\{u=\phi\}} \omega_{u}^{j}\wedge \omega_{\phi}^{k-j} \wedge \omega_X^{n-k} \geq {\bf 1}_{\{u=\phi\}} \omega_u^{k} \wedge \omega_{\phi}^{n-k}. 
	\]
	Noting that $X\setminus U = \{u=\phi\}$ we can write 
	\begin{flalign*}
		 \int_U (\omega_{u}^{j}\wedge\omega_{\phi}^{k-j}\wedge\omega_X^{n-k}  - \omega_u^k\wedge \omega_{X}^{n-k})
		&  \leq \int_X (\omega_{u}^{j}\wedge\omega_{\phi}^{k-j}\wedge\omega_X^{n-k}  - \omega_u^k\wedge \omega_{X}^{n-k})\\
		&= \int_X dd^c (\phi-u) \wedge \sum_{l=0}^{k-j}\omega_{u}^{l} \wedge \omega_{\phi}^{k-j-l}\wedge \omega_X^{n-k} \\
		&= \int_X (\phi-u) dd^c \left[\sum_{l=0}^{k-j}\omega_{u}^{l} \wedge \omega_{\phi}^{k-j-l}\wedge \omega_X^{n-k}\right]. 
	\end{flalign*}
	In the last line above, we have used Stokes theorem.  
	Since $0\leq \phi-u\leq s$, using \eqref{eq: non vanishing} we  can continue the above inequalities as follows: 
	\begin{flalign*}
		\int_U (\omega_{u}^{j}\wedge\omega_{\phi}^{k-j}\wedge\omega_X^{n-k}  - \omega_u^k\wedge \omega_{X}^{n-k})& \leq C_3s  \sum_{l=0}^k\int_U \omega_{u}^{l} \wedge \omega_{\phi}^{k-l}\wedge \omega_X^{n-k}. 
	\end{flalign*}
	Summing up the above inequalities for $j=0,...,k$, one gets 
	\[
	(1-C_4s)  \sum_{l=0}^k\int_U \omega_{u}^{l} \wedge \omega_{\phi}^{k-l}\wedge \omega_X^{n-k} \leq (k+1)\int_U \omega_u^k \wedge \omega_X^{n-k}. 
	\]
	If $\omega_{u}^k \wedge \omega_X^{n-k}=0$ then, for $s>0$ small enough,  the above inequality yields $\int_U \omega_{\phi}^k \wedge \omega_X^{n-k}=0$, hence $U$ has Lebesgue measure $0$, which is impossible. 
 \end{proof}

 
 
  \begin{theorem} \label{thm: domination principle}
 	Assume $\omega$ is hermitian, $u,v$ are continuous $(\omega,\omega_X,k)$-subharmonic functions
 	 such that $\omega_u^k\wedge \omega_X^{n-k} =0$ on $\{u<v\}$. Then $u\geq v$.
 \end{theorem}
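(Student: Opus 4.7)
\medskip

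The plan is to mimic the proof of Theorem~\ref{thm: non vanishing}, with the open set considered there replaced here by $\{u<w\}$ for a suitable truncation $w$ of $v$. Suppose for contradiction that $\{u<v\} \neq \emptyset$, and set $M := \sup_X(v-u) > 0$. For $\varepsilon \in (0, M)$ to be chosen small, define
\[
w := \max(u,\, v - M + \varepsilon),
\]
a continuous $(\omega,\omega_X,k)$-subharmonic function on $X$. The set
\[
U := \{u < w\} = \{v - u > M - \varepsilon\}
\]
is a non-empty open subset (it contains a neighborhood of any point where $v-u$ attains its sup), and on $U$ one has $0 < w - u \leq \varepsilon$, while $w = u$ on $X\setminus U$. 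Since $U \subset \{u<v\}$, the hypothesis gives $H_k(u) \equiv 0$ on $U$.

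The second step is to re-run the Stokes integration-by-parts computation from the proof of Theorem~\ref{thm: non vanishing} with the pair $(u,w)$ in place of $(u,\phi)$. Using the bound of \cite[Corollary 2.4]{KN16} on $dd^c(\omega_u^j \wedge \omega_w^{k-j-1} \wedge \omega_X^{n-k})$, the fact that $w-u$ vanishes on $X\setminus U$ and is bounded by $\varepsilon$ on $U$, and that hermiticity of $\omega$ gives $\omega \geq c_0\omega_X$ so that the factors $\omega_X^{n-m}$ appearing in the bound can be absorbed exactly as in \eqref{eq: non vanishing}, I expect to obtain, for each $0 \leq j \leq k-1$,
\[
\int_U \bigl(\omega_u^j \wedge \omega_w^{k-j} - \omega_u^k\bigr) \wedge \omega_X^{n-k} \;\leq\; C_3\,\varepsilon \sum_{l=0}^{k} \int_U \omega_u^l \wedge \omega_w^{k-l} \wedge \omega_X^{n-k}.
\]
Summing over $j$ and using $\int_U H_k(u) = 0$ this gives
\[
(1 - C_4\varepsilon) \sum_{l=0}^{k} \int_U \omega_u^l \wedge \omega_w^{k-l} \wedge \omega_X^{n-k} \;\leq\; 0,
\]
so choosing $\varepsilon < 1/C_4$ forces every term in the sum to vanish; in particular $\int_U \omega_w^k \wedge \omega_X^{n-k} = 0$.

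The final step, where the argument genuinely diverges from Theorem~\ref{thm: non vanishing}, is to turn this vanishing into a contradiction with $\mathrm{vol}(U)>0$. In Theorem~\ref{thm: non vanishing} this was immediate because $\phi = \rho + m + s$ forced $\omega_\phi \geq \delta\omega_X$ on $U$, but here $w = v - M + \varepsilon$ on $U$ and no such a priori lower bound on $\omega_w$ is available. The main obstacle is therefore to exploit instead the lower bound $\omega \geq c_0\omega_X$ coming directly from hermiticity: the plan is to expand $\omega = \omega_u - dd^c u$, integrate by parts against the currents $\omega_u^l \wedge \omega_w^{k-1-l}\wedge\omega_X^{n-k}$ (using $w = u$ on $\partial U$, together with a smooth approximation via Theorem~\ref{thm: approximation} to justify the manipulations for merely continuous $u,w$), and thereby deduce from the vanishing of all mixed integrals above that $\int_U \omega^k\wedge\omega_X^{n-k}=0$ as well. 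Since $\omega^k\wedge\omega_X^{n-k} \geq c_0^k\,\omega_X^n$, this forces $\mathrm{vol}(U)=0$, contradicting that $U$ is a non-empty open set.
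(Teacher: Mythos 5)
Your strategy is genuinely different from the paper's and unfortunately runs into a fatal obstacle that you partly anticipate but do not overcome. The paper's proof of Theorem~\ref{thm: domination principle} does \emph{not} re-run the Stokes computation of Theorem~\ref{thm: non vanishing} with a new $U$; instead it first reduces to $u\leq v$, then introduces the envelopes $\phi_b:=P_{\omega,k}(bu-(b-1)v)$, uses Lemma~\ref{lem:Demkey} on the contact set $\{\phi_b=bu-(b-1)v\}$ together with Theorem~\ref{thm: non vanishing} to force $\sup_X\phi_b\geq \inf_X v$, and finally lets $b\to+\infty$: the limit of the decreasing family $(\phi_b)$ is a $(\omega,k)$-sh function that would be $-\infty$ on any set $\{u<v-a\}$, contradicting integrability. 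No integration by parts is needed.

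The problem with your approach is the role played by the lower bound $\omega_{\phi}\geq\delta\omega_X$ on $U$ in the proof of Theorem~\ref{thm: non vanishing}. That bound is used twice: \textbf{(a)} to absorb the lower-degree factors $\omega_X^{n-m}$ (for $m=k-2,k-1$) appearing in the $dd^c$-estimate of \cite[Cor.\ 2.4]{KN16} back into products of the form $\omega_u^l\wedge\omega_{\phi}^{k-l}\wedge\omega_X^{n-k}$, so that the sum closes and one gets the clean inequality $(1-C_4 s)\sum_{l}\int_U\omega_u^l\wedge\omega_{\phi}^{k-l}\wedge\omega_X^{n-k}\leq (k+1)\int_U\omega_u^k\wedge\omega_X^{n-k}$; and \textbf{(b)} to convert the resulting vanishing of $\int_U\omega_{\phi}^k\wedge\omega_X^{n-k}$ into $\mathrm{vol}(U)=0$. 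In Theorem~\ref{thm: non vanishing} both uses are available precisely because $\phi=\rho+m+s$ on $U$ and $\omega_{\rho}\geq\delta\omega_X$. In your construction $\omega_w=\omega_v$ on $U$, and $v$ is merely $(\omega,\omega_X,k)$-subharmonic, so $\omega_w$ has no uniform lower bound on $U$. Replacing $\omega_{\phi}\geq\delta\omega_X$ by the hermiticity bound $\omega\geq c_0\omega_X$ does not repair step (a): writing $\omega_X^{k-m}\leq c_0^{-(k-m)}\omega^{k-m}$ produces terms $\omega_u^l\wedge\omega_w^{m-l}\wedge\omega^{k-m}\wedge\omega_X^{n-k}$ which involve the \emph{fixed} form $\omega$ rather than $\omega_u$ or $\omega_w$, so they are not dominated by $\sum_l\omega_u^l\wedge\omega_w^{k-l}\wedge\omega_X^{n-k}$ and the sum no longer closes; if you instead enlarge the right-hand sum to include mixed products with $\omega$, the left-hand side no longer controls it and the final comparison $(1-C_4\varepsilon)(\cdot)\leq 0$ is not obtained. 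Step (b) is also not repaired: you cannot conclude $\mathrm{vol}(U)=0$ from $\int_U\omega_w^k\wedge\omega_X^{n-k}=0$ alone (the $k$-Hessian measure of a bounded $(\omega,\omega_X,k)$-sh function can vanish on large open sets), and the proposed integration by parts to show $\int_U\omega^k\wedge\omega_X^{n-k}=0$ involves the currents $dd^c u$, $dd^c w$ and $dd^c(\omega_X^{n-k})$, none of which is sign-controlled; you would end up exactly back at the absorption problem of step (a). In short, the truncation $w=\max(u,v-M+\varepsilon)$ is not a valid substitute for $\phi=\max(u,\rho+m+s)$, because it does not carry the strict positivity that drives both halves of the Stokes argument. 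This is why the paper takes the envelope/non-vanishing route rather than adapting the Stokes computation.
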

 
 \begin{proof}
 	By replacing $v$ with $\max(u,v)$ and noting that $\{u<v\} = \{u<\max(u,v)\}$, we can assume that $u\leq v$. Our plan is thus to show that $u=v$.
 	
 	For $b\geq 1$, we set $\phi_b : = P_{\omega,k}(bu-(b-1)v)$. It is a negative $(\omega,k)$-sh function. Since $bu-(b-1)v$ is continuous on $X$, so is $\phi_b$. Observe also that 
 	$$
 	b^{-1}\phi_b+ (1-b^{-1})v\leq u
 	$$
 	 with equality on the contact set $\mathcal C:= \{\phi_b=bu-(b-1)v\}$.
 	 Now by Lemma \ref{lem:Demkey}
 	\[
 	{\bf 1}_{\mathcal C} \left(b^{-k} \omega_{\phi_b}^k \wedge \omega_X^{n-k} + (1-b^{-1})^k \omega_v^k\wedge \omega_X^{n-k} \right) \leq {\bf 1}_{\mathcal C} \omega_u^k\wedge \omega_X^{n-k},
 	\]
while $\int_{\mathcal C} H_k(\phi_b)=\int_X H_k(\phi_b)>0$ by Theorem \ref{thm: non vanishing}.
Since $H_k(u)=0$ on $\{u<v\}$, it follows that  $\mathcal C \cap \{u=v\}$ contains at least one point, say $x$. We then have 
$$
\phi_b(x) = bu(x) -(b-1)v(x) = v(x)\geq \inf_X v;
$$
 in particular $\sup_X \phi_b \geq \inf_X v$. 
 
 Note  that $\phi_b$ is decreasing in $b$ since $u\leq v$. The limit $\phi:=\lim_{b\to +\infty} \phi_b$ is a $(\omega,k)$-sh function which is not identically $-\infty$. For any $a>0$, 
 $$
 \phi_b \leq bu-(b-1)v \leq v -ba
 $$
  on the open set $\{u<v-a\}$, thus $\phi=-\infty$ on this set. Since $\phi$ is integrable on $X$, the set $\{u<v-a\}$ must be empty, hence letting $a\to 0^+$ gives $u\geq v$.  
 \end{proof}

  \begin{corollary}\label{cor: min principle}
  	Assume $\omega$ is hermitian, $u$ and $v$ are continuous $(\omega,\omega_X,k)$-subharmonic functions such that $\omega_u^k\wedge \omega^{n-k} \leq c \omega_v^k \wedge \omega^{n-k}$ on $\{u<v\}$ for some  $c\in [0,1)$. Then $u\geq v$.
  \end{corollary}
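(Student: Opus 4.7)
The plan is to adapt the proof of Theorem \ref{thm: domination principle}, replacing the assumption $H_k(u)=0$ on $\{u<v\}$ by the weaker strict bound $H_k(u) \leq c\, H_k(v)$ with $c<1$; the slack $1-c>0$ will play the role of the vanishing of $H_k(u)$ in that argument.

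First I would reduce to the case $u\leq v$. Replacing $v$ by $\hat v:=\max(u,v)$ does not affect the bad set, since $\{u<\hat v\}=\{u<v\}$ and on this set $\hat v=v$. By Lemma \ref{lem:Demkey}, $H_k(\hat v)\geq H_k(v)$ holds on $\{v\geq u\}$, so the hypothesis transfers to $H_k(u)\leq c\,H_k(\hat v)$ on $\{u<\hat v\}$. Since proving $u\geq \hat v$ yields $u\geq v$, I may assume $u\leq v$ from now on.

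Following the template of Theorem \ref{thm: domination principle}, for $b\geq 1$ set $\phi_b:=P_{\omega,k}(bu-(b-1)v)$ with contact set $\mathcal C_b:=\{\phi_b=bu-(b-1)v\}$. The auxiliary function $w:=b^{-1}\phi_b+(1-b^{-1})v$ is $(\omega,\omega_X,k)$-subharmonic (as a convex combination of forms in the cone $\Gamma_k(\omega_X)$), satisfies $w\leq u$ everywhere with equality on $\mathcal C_b$, and a multilinear expansion of $(\omega+dd^c w)^k\wedge\omega_X^{n-k}$ combined with Lemma \ref{lem:Demkey} gives
\[
H_k(u)\geq H_k(w) \geq b^{-k}H_k(\phi_b)+(1-b^{-1})^k H_k(v) \quad \text{on } \mathcal C_b.
\]
The main obstacle is to show $\mathcal C_b\not\subset\{u<v\}$ for $b$ sufficiently large. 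If the inclusion held, the hypothesis would force $H_k(u)\leq c\,H_k(v)$ on $\mathcal C_b$, yielding $b^{-k}H_k(\phi_b)\leq \bigl(c-(1-b^{-1})^k\bigr) H_k(v)$ on $\mathcal C_b$. For $b$ large enough that $(1-b^{-1})^k>c$, the right side is non-positive while the left is non-negative, forcing $H_k(\phi_b)\equiv 0$ on $\mathcal C_b$; since $H_k(\phi_b)$ is concentrated on its contact set, this contradicts $\int_X H_k(\phi_b)>0$ from Theorem \ref{thm: non vanishing}.

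Hence, for such $b$, there is $x_b\in\mathcal C_b$ with $u(x_b)=v(x_b)$, at which $\phi_b(x_b)=u(x_b)\geq\inf_X u$. Since $u\leq v$ the family $(\phi_b)$ is decreasing in $b$, so the pointwise limit $\phi:=\lim_b\phi_b$ lies in $SH(X,\omega,k)$ and is not identically $-\infty$. On each open set $\{u<v-a\}$ (for $a>0$) the bound $\phi_b\leq v-ab$ forces $\phi\equiv-\infty$; local integrability of $(\omega,\omega_X,k)$-subharmonic functions then requires $\{u<v-a\}=\emptyset$, and letting $a\searrow 0$ gives $u\geq v$.
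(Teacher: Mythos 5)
Your proof is correct, and it is built on the same core objects as the paper's: the envelopes $\phi_b=P_{\omega,k}(bu-(b-1)v)$, the bound from Lemma \ref{lem:Demkey} combined with the multilinear (G\aa rding) expansion on the contact set, and the non-vanishing result Theorem \ref{thm: non vanishing}. The difference is in the final step. The paper fixes a single $b$ with $(1-b^{-1})^k\geq c$, derives from your inequality that $\omega_{\phi_b}^k\wedge\omega_X^{n-k}=0$ on $\mathcal{C}_b\cap\{u<v\}$, observes that this measure is supported on $\mathcal{C}_b$ and that $\{\phi_b<v\}\cap\mathcal{C}_b=\{u<v\}\cap\mathcal{C}_b$, and then invokes Theorem \ref{thm: domination principle} as a black box to get $\phi_b\geq v$ and hence $bu-(b-1)v\geq\phi_b\geq v$, i.e.\ $u\geq v$. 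You instead inline the proof of the domination principle: you push $b\to\infty$, use the existence of a contact point on $\{u=v\}$ to keep $\sup_X\phi_b$ bounded below, and let the singularity of $\lim_b\phi_b$ on $\{u<v-a\}$ force emptiness. To run that limit you need the monotonicity of $b\mapsto\phi_b$, which is why you perform the preliminary reduction to $u\leq v$ (via $\hat v=\max(u,v)$ and Lemma \ref{lem:Demkey}); the paper's route does not need this reduction. Both arguments are valid; the paper's is more economical since it reuses Theorem \ref{thm: domination principle} rather than re-deriving it, while yours is self-contained and makes the mechanism visible.
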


 \begin{proof}
Take $b$ so large that $(1-b^{-1})^k \geq c$, and consider $\phi:= P_{\omega,k}(bu-(b-1)v)$. Then $b^{-1}\phi+ (1-b^{-1})v\leq u$ with equality on the contact set 
\[\mathcal C:= \{\phi=bu-(b-1)v\}.
\] 
By Lemma \ref{lem:Demkey} we have 
 	\[
 	{\bf 1}_{\mathcal C} \left(b^{-k} \omega_{\phi}^k \wedge \omega_X^{n-k} + (1-b^{-1})^k \omega_v^k\wedge \omega_X^{n-k} \right) \leq {\bf 1}_{\mathcal C} \omega_u^k\wedge \omega_X^{n-k}. 
 	\]
 	Multiplying with ${\bf 1}_{\{u<v\}}$ and using the assumption $\omega_u^k\wedge \omega^{n-k} \leq c \omega_v^k \wedge \omega^{n-k}$ on $\{u<v\}$ and $(1-b^{-1})^k\geq c$,  we then see that 
 	\[
 	{\bf 1}_{\mathcal C \cap \{u<v\}}  \omega_{\phi}^k \wedge \omega^{n-k} =0. 
 	\]
 	Since $\omega_{\phi}^k \wedge \omega^{n-k}$ is supported on $\mathcal C$, we thus have 
 	\[
 	\int_{\{\phi < v\}} \omega_{\phi}^k \wedge \omega^{n-k} = \int_{\{bu-(b-1)v < v\}} \omega_{\phi}^k \wedge \omega^{n-k} =\int_{ \{u<v\}}  \omega_{\phi}^k \wedge \omega^{n-k}=0. 
 	\]
 	Invoking the domination principle (Theorem \ref{thm: domination principle}) we then get $\phi\geq v$. It thus follows that $bu-(b-1)v\geq \phi\geq v$, hence $u\geq v$.
 \end{proof}

 \begin{coro} \label{cor: max principle exp}
	Assume $\omega$ is hermitian and $u$ and $v$ are continuous 
	$(\omega,\omega_X,k)$-subharmonic functions.
	Then
$$
 e^{-v}(\omega+dd^c v)^k \wedge \omega_X^{n-k} \geq e^{-u} (\omega+dd^c u)^k \wedge \omega_X^{n-k}
 \Longrightarrow u\geq v.
$$
 \end{coro}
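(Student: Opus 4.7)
The plan is to reduce this corollary to the minimum/comparison principle of Corollary \ref{cor: min principle}, using a shift by $\varepsilon$ to turn the pointwise exponential factor into a uniform constant strictly less than $1$.

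First, I would rewrite the assumption as a comparison between Hessian measures with a variable multiplier. Since $u,v$ are continuous, $H_k(u)$ and $H_k(v)$ are well-defined positive Radon measures, and $e^{-u},e^{-v}$ are continuous positive functions. The hypothesis $e^{-v}H_k(v)\geq e^{-u}H_k(u)$ can then be rewritten (as Radon measures) in the form
\[
H_k(u)\;\leq\;e^{u-v}\,H_k(v).
\]
On the open set $\{u<v\}$ the factor $e^{u-v}$ is strictly less than $1$, but it may approach $1$ along sequences, so we cannot directly plug into Corollary \ref{cor: min principle}.

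The key trick is to shift $v$. For $\varepsilon>0$, set $v_\varepsilon := v-\varepsilon$. Since constants don't affect $dd^c$, $v_\varepsilon$ is again a continuous $(\omega,\omega_X,k)$-subharmonic function with $H_k(v_\varepsilon)=H_k(v)$. On the open set $\{u<v_\varepsilon\}=\{u<v-\varepsilon\}\subset\{u<v\}$ we have $u-v<-\varepsilon$, hence $e^{u-v}\leq e^{-\varepsilon}$. Combining with the rewritten hypothesis yields
\[
H_k(u)\;\leq\;e^{-\varepsilon}\,H_k(v_\varepsilon)\qquad\text{on }\{u<v_\varepsilon\}.
\]

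Now Corollary \ref{cor: min principle}, applied to $u$ and $v_\varepsilon$ with the constant $c:=e^{-\varepsilon}\in[0,1)$, gives $u\geq v_\varepsilon=v-\varepsilon$ on $X$. Letting $\varepsilon\searrow 0^+$ yields $u\geq v$, as required. No serious obstacle is expected, since all the technical work (the domination principle and its $c<1$ variant) is already available; the only point to be careful about is that we really need $c$ strictly less than $1$, which is exactly what the $\varepsilon$-shift achieves.
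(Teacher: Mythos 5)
Your argument is correct and is essentially the same as the paper's own proof: shift $v$ by a constant $\varepsilon$ (the paper uses $a$), observe that on $\{u<v-\varepsilon\}$ the hypothesis gives $H_k(u)\le e^{-\varepsilon}H_k(v)=e^{-\varepsilon}H_k(v-\varepsilon)$, invoke Corollary~\ref{cor: min principle} with $c=e^{-\varepsilon}<1$, and let $\varepsilon\searrow 0$. No difference in substance, only slightly more detailed bookkeeping on your part.
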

 
 \begin{proof}
 Fix $a>0$. On the set $\{u<v-a\}$ we have $H_k(u)\leq e^{-a} H_k(v)$. By Corollary \ref{cor: min principle}, $u\geq v-a$. Since it holds for all $a>0$, we obtain $u\geq v$. 
 \end{proof}

  \section{Proof of the main theorems}

    \subsection{A priori estimates}

In this section we prove Theorem B,
 which is an extension of the main result  of Ko{\l}odziej-Nguyen \cite{KN16}:

\begin{theorem} \label{thm:uniform}
Let $\omega_0,\omega$ be  semi-positive and big forms  such that $\omega_0 \leq \omega$.
Let $\f$ be a smooth $(\omega,\omega_X,k)$-subharmonic function satisfying
 $$
 (\omega+dd^c \f)^k \wedge \omega_X^{n-k} \leq f dV_X,
 $$
where  $f \in L^p(dV_X)$, $p>n/k$.
 Then
 $$
 {\rm Osc}_X(\f) \leq C,
 $$
 where $C>0$ depends on $\omega_0,n,k,p$ and an upper bound for
 $||f||_{L^p}$.
\end{theorem}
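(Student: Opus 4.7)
The strategy is to reduce the Hessian estimate to the Monge-Amp\`ere $L^\infty$ estimate of Theorem \ref{thm:uniformHessian} via the $\omega$-psh envelope of $\f$. After normalizing $\sup_X \f = 0$, set
\[
u := P_{\omega}(\f) = \sup\{v \in \psh(X,\omega) : v \leq \f\}.
\]
Then $u \in \psh(X,\omega) \cap L^\infty(X)$ with $\inf_X \f \leq u \leq \f \leq 0$ (the constant $\inf_X \f$ is admissible since $\omega \geq 0$), so $\sup_X u \leq 0$; since $u \leq \f$ we have $\inf_X \f \geq \inf_X u$. Thus a uniform lower bound on $u$ transfers to $\f$ and yields the desired oscillation bound.

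The heart of the argument is to prove
\[
(\omega + dd^c u)^n \leq C_{n,k}\, f^{n/k}\, dV_X.
\]
To handle the degenerate big setting I would perturb to the hermitian case: let $\omega_\vep := \omega + \vep\, \omega_X$ and $u_\vep := P_{\omega_\vep}(\f)$. By the envelope regularity property listed in the preliminaries, $u_\vep$ is $\mathcal{C}^{1,1}$-smooth and
\[
(\omega_\vep + dd^c u_\vep)^n = \mathbf{1}_{\{u_\vep = \f\}}\,(\omega_\vep + dd^c \f)^n.
\]
On the contact set $\{u_\vep = \f\}$ the function $\f - u_\vep \geq 0$ attains a local minimum, so $dd^c(\f - u_\vep) \geq 0$ a.e.; hence $0 \leq \omega_\vep + dd^c u_\vep \leq \omega_\vep + dd^c \f$, and in particular $\omega_\vep + dd^c \f$ is semi-positive there. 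The pointwise Newton-Maclaurin inequality
\[
\frac{\alpha^n}{\omega_X^n} \leq C_{n,k}\left(\frac{\alpha^k \wedge \omega_X^{n-k}}{\omega_X^n}\right)^{n/k} \qquad (\alpha \geq 0)
\]
applied to $\alpha = \omega_\vep + dd^c \f$ then yields $(\omega_\vep + dd^c u_\vep)^n \leq C_{n,k}\, f_\vep^{n/k}\, dV_X$, where $f_\vep\, dV_X := (\omega_\vep + dd^c \f)^k \wedge \omega_X^{n-k}$. Expanding $(\omega + dd^c \f + \vep\, \omega_X)^k$ and bounding the cross terms $(\omega + dd^c \f)^j \wedge \omega_X^{n-j}$ for $j < k$ via G{\aa}rding's inequality gives $f_\vep \leq f + O(\vep(1+f))$ uniformly in $\f$. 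Sending $\vep \to 0$ and using the weak continuity of the Monge-Amp\`ere operator along the decreasing sequence $u_\vep \searrow u$ produces the desired inequality for $u$.

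The conclusion is then immediate: since $p > n/k$, the density $g := C_{n,k} f^{n/k}$ lies in $L^{1+\delta}(dV_X)$ with $1 + \delta = pk/n > 1$, and $\|g\|_{L^{1+\delta}}$ is controlled by $\|f\|_{L^p}$. Theorem \ref{thm:uniformHessian} applied to $u$ (using $\omega_0 \leq \omega$) yields $\mathrm{Osc}_X(u) \leq C$ for some $C$ depending only on $\omega_0, n, k, p$ and $\|f\|_{L^p}$. Combined with $\sup_X u \leq 0$, this gives $\inf_X u \geq -C$, hence $\inf_X \f \geq \inf_X u \geq -C$, and so $\mathrm{Osc}_X(\f) \leq C$. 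The principal obstacle is the envelope Monge-Amp\`ere bound: in the degenerate big setting the $\mathcal{C}^{1,1}$-regularity of the envelope and the explicit form of its MA measure are clean only in the hermitian case, so the transition to the semi-positive big case requires the perturbation $\omega_\vep \searrow \omega$ together with careful control of the error $f_\vep - f$ via G{\aa}rding-Newton-Maclaurin inequalities.
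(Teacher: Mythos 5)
Your overall reduction---pass to the envelope $u=P_\omega(\f)$, dominate its Monge--Amp\`ere measure by $C f^{n/k}\,dV_X$ via a G{\aa}rding/Newton--Maclaurin inequality on the contact set, then quote Theorem~\ref{thm:uniformHessian}---is exactly the paper's strategy. Two issues, however.

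First, the $\omega_\vep$-perturbation you introduce to justify the envelope regularity is a sensible instinct, but your claimed bound $f_\vep\leq f+O(\vep(1+f))$ uniformly in $\f$ does \emph{not} follow from G{\aa}rding's inequality: the Newton--Maclaurin inequalities control higher-order Hessians by lower-order ones, not the reverse, so the cross terms $(\omega+dd^c\f)^j\wedge\omega_X^{n-j}$ with $j<k$ are not pointwise bounded by $f$. As written this estimate is unjustified.

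Second, and more seriously, the final step does not close. You deduce $\inf_X u\geq -C$ from ${\rm Osc}_X(u)\leq C$ together with $\sup_X u\leq 0$. But the oscillation bound only gives $\inf_X u\geq \sup_X u-C$, and the inequality $\sup_X u\leq 0$ provides no \emph{lower} bound on $\sup_X u$, which could a priori drift to $-\infty$: the envelope $P_\omega(\f)$ need not equal $\f$ at a maximizing point of $\f$. Your own observation $\inf_X u=\inf_X\f$ shows that what is missing is precisely a lower bound on $\sup_X u$, and this requires an extra ingredient. The paper supplies it with a non-collapsing argument: \cite[Proposition 3.4]{GL22} gives a uniform lower bound $v_M^-(\omega)>0$ on the Monge--Amp\`ere volume of bounded $\omega$-psh functions with oscillation $\leq M$, and since $(\omega+dd^c u)^n$ is carried by the contact set where $u=\f$,
$$
v_M^-(\omega)\,\bigl(-\sup_X u\bigr)^{1/q'}\;\leq\;\int_X|u|^{1/q'}(\omega+dd^c u)^n\;\leq\;\Bigl(\int_X|\f|\,\omega^n\Bigr)^{1/q'}\Bigl(\int_X f^{p'}\omega^n\Bigr)^{1/p'}\;\leq\;C,
$$
with $p'=pk/n>1$, $q'$ its conjugate exponent, and $\int_X|\f|\,\omega^n$ controlled by the $L^1$-compactness of Proposition~\ref{pro:integrability}. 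Without this non-collapsing estimate your proof of the lower bound on $\f$ is incomplete.
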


\begin{proof}
We can assume that 
 $\sup_X \varphi=0$. 
Let $u:= P_{\omega}(\varphi)$ be the largest $\omega$-psh function lying below $\varphi$. 
Let $\mathcal{C}$ denote the contact set $\mathcal{C}:= \{u=\varphi\}$.
 Then  $u$ is $(\omega,\omega_X,k)$-sh with $u \leq \f$
 hence Lemma \ref{lem:Demkey} yields
\[
{\bf 1}_{\mathcal{C}}(\omega+dd^c u)^k \wedge \omega_X^{n-k} \leq {\bf 1}_{\mathcal{C}} (\omega+dd^c\varphi)^k \wedge \omega_X^{n-k} 
\leq  f dV_X.
\]

Observe that the smooth function $h:=\omega_X^n/dV_X \geq \delta_0$ is positive on $X$.
Recall that the   Monge-Amp\`ere measure $(\omega+dd^c u)^n$ of $u$ is concentrated on the contact set 
${\mathcal C}$. We set $g:=(\omega+dd^c u)^n/dV_X$.
It follows from G{\aa}rding's inequalities \eqref{eq:mixed} that
$$
\delta_0^{1-k/n} g^{k/n} \leq h^{1-k/n} g^{k/n} \leq f,
$$
hence $g \leq C_0 f^{n/k}$. Since $f \in L^p$  with $p>n/k$,
we obtain $g \in L^{1+\delta}$ with $\delta>0$.
Theorem \ref{thm:uniformHessian}
therefore provides  a uniform bound  
$$
{\rm Osc}_X(u) \leq M.
$$
This provides a uniform bound on $u \leq 0$ if we can bound $\sup_X u$ from below.

Let $q'$ be the conjugate exponent to $p'=pk/n>1$.
We obtain a uniform lower bound on $\sup_X u$
by using \cite[Proposition 3.4]{GL22} and
H\"older inequality, via
\begin{eqnarray*}
v_M^-(\omega) (-\sup_X u)^{1/q'} & \leq &  \int_X |u|^{1/q'} (\omega+dd^c u)^n \\
&\leq &\left(\int_X |\f| \omega^n \right)^{1/q'}  \left(\int_X f^{p'} \omega^n \right)^{1/p'} \\
 &\leq & C,
\end{eqnarray*}
as    $\int_X |\f| \omega^n $ is bounded from above independently of $\f$ by Proposition \ref{pro:integrability}.
Together with the uniform bound for ${\rm Osc}_X(u)$, this yields a uniform bound
 for $|u|$. This implies a uniform bound for $\varphi$ since $u\leq \varphi$ and $\sup_X \varphi=0$. 
\end{proof}

 \begin{remark} \label{rem:optimalbound}
 We also have a uniform a priori bound
 when $f$ satisfies the weaker integrability condition
 $$
 \int_X f^{n/k} |\log f|^n \left| h \circ \log \circ \log f\right|^n dt <+\infty
 $$
 where $h$ is an increasing continuous function such that $\int^{+\infty} dt/h(t)<+\infty$.
It follows indeed that $g$ satisfies then Ko{\l}odziej's optimal integrability condition
$$
\int_X g |\log g|^n \left| h \circ \log \circ \log g\right|^n dt<+\infty,
$$
which  ensures a uniform bound on ${\rm Osc}_X(u)$ by \cite[Theorem 2.5.2]{Kol98}.
We actually need here the extension of Ko{\l}odziej's result to
the case of semi-positive and big forms \cite{EGZ09}, as well as to the 
hermitian setting \cite{DK12,GL21}.

The same proof thus applies with a minor modification: to obtain a lower bound for $\sup_X u$,
 we replace H\"older inequality by the additive  H\"older-Young inequality.
If $w$ denotes the convex weight $w(t) \sim t (\log t)^n (h \circ \log \circ \log t)^n$,
$w^*$ is its conjugate convex weight, and 
$(w^*)^{-1}$ denotes the inverse of $w^*$,
we obtain this way
 a uniform upper bound
\begin{eqnarray*}
 v_M^-(\omega) (w^*)^{-1}(-\sup_X u) 
 &\leq & \int_X (w^*)^{-1}(u) (\omega+dd^c u)^n \\
 &\leq & \int_X w \circ f \, dV_X+\int_X (-u) dV_X
\leq C.
\end{eqnarray*}
 \end{remark}

   \subsection{Stability estimates}

We strengthen Theorem B by establishing  the following stability estimate:
 
 \begin{theorem} \label{thm:stability}
 Assume $\omega$ is hermitian and $\omega\geq \omega_0$ for some semi-positive and big form $\omega_0$. 
  Assume   $0 \leq f, g \in L^p(dV_X)$ with $p>n/k$, $\varphi,\psi$ 
  are continuous $(\omega,\omega_X,k)$-subharmonic functions such that 
 \[
 (\omega+dd^c \varphi)^k \wedge \omega_X^{n-k} \geq e^{\varphi} f \omega_X^n, 
\; \;  \text{ and } \; \; 
 (\omega+dd^c \psi)^k \wedge \omega_X^{n-k} \leq e^{\psi} g \omega_X^n. 
 \]
 
 Then 
 \begin{equation}
 	\label{eq: stability}
 	 \varphi \leq \psi + C \|f-g\|_{p}^{1/k}
 \end{equation}
 for some  $C>0$ depending on $\omega_0, n,k$ and upper bounds 
 for $||f||_p, \|\varphi\|_{\infty}$, $|\inf_X \psi|$.
\end{theorem}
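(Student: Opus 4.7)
The plan is to construct a Monge-Amp\`ere barrier $w$ encoding the defect $|f-g|^{n/k}$, form a convex combination $\tilde\varphi$ of $\varphi$ with $w$, and conclude via the Hessian comparison principle of Corollary~\ref{cor: min principle}. Set $\varepsilon := \|f-g\|_{p}^{1/k}$; thanks to the $L^\infty$-bounds on $\varphi$ and the lower bound on $\psi$, we may assume $\varepsilon$ is as small as desired.

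\emph{Barrier.} The density $|f-g|^{n/k}$ lies in $L^{pk/n}$ with $pk/n>1$ and $L^{pk/n}$-norm equal to $\varepsilon^n$. Using \cite[Lemma 3.3]{GL21} on the big form $\omega_0$, together with the Monge-Amp\`ere oscillation estimate of Theorem~\ref{thm:uniformHessian}, produce $w\in PSH(X,\omega_0)\cap\mathcal{C}^0(X)\subset SH(X,\omega,k)$, normalized by $\sup_X w = 0$, with $\|w\|_\infty\le C_0$ bounded uniformly, such that
\[
(\omega_0+dd^c w)^n\;\ge\;K\,\frac{|f-g|^{n/k}}{\varepsilon^n}\,\omega_X^n,\qquad K:=e^{n\|\varphi\|_\infty/k}.
\]
Since $\omega\ge\omega_0$, the same bound holds with $\omega$, and G\aa{}rding's geometric-mean inequality $(\omega+dd^c w)^k\wedge\omega_X^{n-k}\ge((\omega+dd^c w)^n)^{k/n}(\omega_X^n)^{(n-k)/n}$ then yields $H_k(w)\ge K^{k/n}|f-g|\varepsilon^{-k}\omega_X^n$.

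\emph{Modified subsolution via Brunn--Minkowski.} Fix $A>\|\varphi\|_\infty+2k+1$ and set
\[
\tilde\varphi := (1-\varepsilon)\varphi + \varepsilon w - A\varepsilon,
\]
a continuous $(\omega,\omega_X,k)$-subharmonic function. The concavity of $\sigma_k^{1/k}$ on $\Gamma_k$ (an avatar of G\aa{}rding's inequality \eqref{eq:mixed}) gives, for the densities with respect to $\omega_X^n$,
\[
(H_k(\tilde\varphi))^{1/k}\;\ge\;(1-\varepsilon)e^{\varphi/k}f^{1/k}+K^{1/n}|f-g|^{1/k}.
\]
By the subadditivity $g^{1/k}\le f^{1/k}+(g-f)_+^{1/k}$ and the choice $K^{1/n}=e^{\|\varphi\|_\infty/k}\ge(1-\varepsilon)e^{\varphi/k}$, the right-hand side is $\ge(1-\varepsilon)e^{\varphi/k}g^{1/k}$ pointwise, in both regimes $g\le f$ and $g>f$; raising to the $k$-th power,
\[
H_k(\tilde\varphi)\;\ge\;(1-\varepsilon)^k e^{\varphi}g\,\omega_X^n.
\]

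\emph{Comparison.} On the open set $U:=\{\psi<\tilde\varphi\}$, the inequality $\psi<(1-\varepsilon)\varphi+\varepsilon w-A\varepsilon\le(1-\varepsilon)\varphi-A\varepsilon$ (using $w\le 0$) gives $\psi-\varphi<\varepsilon(\|\varphi\|_\infty-A)$, hence
\[
\frac{H_k(\psi)}{H_k(\tilde\varphi)}\;\le\;\frac{e^{\psi}g}{(1-\varepsilon)^k e^{\varphi}g}\;\le\;(1-\varepsilon)^{-k}e^{-\varepsilon(A-\|\varphi\|_\infty)}\le c<1
\]
for $\varepsilon$ sufficiently small. Corollary~\ref{cor: min principle} then yields $\psi\ge\tilde\varphi$ on $X$; unwinding the definition of $\tilde\varphi$ together with $\|w\|_\infty\le C_0$ gives $\varphi-\psi\le\varepsilon(\|\varphi\|_\infty+C_0+A)=C\|f-g\|_{p}^{1/k}$, as desired. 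The pivotal step is the Brunn--Minkowski estimate: the barrier strength $K$ must be calibrated to the $L^\infty$-bound on $\varphi$ so that the $|f-g|^{1/k}$ contribution from $w$ absorbs the worst-case deficit $(g-f)_+^{1/k}$, while the shift $-A\varepsilon$ provides the strictly-less-than-$1$ ratio required by the Hessian comparison principle.
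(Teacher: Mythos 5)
Your overall strategy---construct a Monge-Amp\`ere barrier $w$ encoding $|f-g|^{n/k}$, form the convex combination $\tilde\varphi=(1-\varepsilon)\varphi+\varepsilon w-A\varepsilon$, and conclude by a Hessian comparison principle---is the same as the paper's; keeping the full $\sigma_k^{1/k}$-concavity and invoking Corollary~\ref{cor: min principle} instead of the additive bound $|f-g|+f\ge g$ and Corollary~\ref{cor: max principle exp} are cosmetic variants of the same calculation. However, the barrier step as written fails. With your choice $\varepsilon:=\|f-g\|_p^{1/k}$, the density $|f-g|^{n/k}/\varepsilon^n$ has $L^{pk/n}$-norm equal to $1$, and \cite[Lemma 3.3]{GL21} then only produces a function $w$ with $(\omega_0+dd^c w)^n\ge c_0\,|f-g|^{n/k}\varepsilon^{-n}\omega_X^n$ for the \emph{fixed} small constant $c_0=c(\delta,\omega_0)$: there is no freedom to replace $c_0$ by the large constant $K=e^{n\|\varphi\|_\infty/k}$. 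That constant is exactly what you need for the absorption $K^{1/n}\ge(1-\varepsilon)e^{\varphi/k}$, so the Brunn--Minkowski step collapses as stated.

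The fix is to inflate $\varepsilon$ by the controlled factor $(K/c_0)^{1/n}$---this is precisely what the paper does by taking $\varepsilon:=b^{-1/n}\|f-g\|_p^{1/k}e^{\sup_X\varphi/k}$, where $b$ plays the role of $c_0$---after which all occurrences of $\varepsilon$ inherit that factor and you still land on $\varphi-\psi\le C\|f-g\|_p^{1/k}$ with $C$ depending on the allowed quantities. A secondary gap: Corollary~\ref{cor: min principle} (via Theorem~\ref{thm: domination principle}) is stated for \emph{continuous} competitors, while \cite[Lemma 3.3]{GL21} only provides a bounded $\omega_0$-psh function, and for a merely semi-positive big $\omega_0$ continuity of the barrier on all of $X$ is genuinely problematic. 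The paper sidesteps this by solving the Monge-Amp\`ere equation with respect to the hermitian form $\omega$ itself, where the continuity of the solution is known from \cite{KN15,KN19}; you should do the same (the resulting constant still only depends on $\omega_0$, because the $L^\infty$ bound is obtained from Theorem~\ref{thm:uniformHessian} and the lower bound on the normalizing constant from \cite[Proposition 3.4]{GL22}).
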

  
  In particular if $\f,\p$ are solutions of the corresponding equations, then
  $$
  ||\f-\p||_{\infty} \leq C \|f-g\|_{p}^{1/k},
  $$
so that $\f-\p$ is uniformly small if $f$ is close to $g$ in $L^p$.
 
 \begin{proof}
 Our proof uses a perturbation argument from \cite{GLZ18,LPT20}, which goes back to \cite{Kol96}. 
We solve the complex Monge-Amp\`ere equation 
\[
(\omega+dd^c u)^n = b h \omega_X^n, 
\; \;  \text{ where } \;  \; 
h=\frac{|f-g|^{n/k}}{\|f-g\|_p^{n/k}} +1,
\]
$\sup_X u=0$ and $b>0$ is a constant. 
The existence of a continuous solution $u \in PSH(X,\omega)$  follows from \cite{KN15, KN19}. 

We can assume without loss of generality that $\omega_X$ is a Gauduchon metric
(i.e. $dd^c (\omega_X^{n-1})=0$) which dominates $\omega$. 
Thus $u $ is $\omega_X$-psh with 
$$
(\omega_X+dd^c u)^n \geq (\omega+dd^c u)^n \geq b \omega_X^n
$$
 since $h \geq 1$. 
The mixed Monge-Amp\`ere inequalities (see \cite{Ng16}) yield
$$
(\omega_X+dd^c u) \wedge \omega_X^{n-1} \geq b^{1/n} \omega_X^n,
$$
hence $b \leq 1$ since $\int_X (\omega_X+dd^c u) \wedge \omega_X^{n-1}=\int_X \omega_X^n$
by the Gauduchon condition.
 
Since $h \in L^q$ with $q= pk/n>1$ and $b \leq 1$,
 Theorem \ref{thm:uniform} ensures that $\|u\|_{\infty}$ is uniformly bounded.  
 It thus follows from \cite[Proposition 3.4]{GL22} that $b \geq \delta_0>0$
 is uniformly bounded from below.

If $\|f-g\|_{p}^{1/k}$ is not  small, the stability inequality \eqref{eq: stability} is trivially
obtained by adjusting the value of $C$. In the remainder of the proof, we can thus assume without
loss of generality that 
\begin{eqnarray} \label{eq:stab2}
\e:=b^{-1/n} \|f-g\|_p^{1/k}e^{\sup_X \varphi/k}  \leq 1/2.
\end{eqnarray}

By the mixed Monge-Amp\`ere inequalities \eqref{eq:mixed} we have 
\[
(\omega+dd^c u)^k \wedge \omega_X^{n-k}\geq b^{k/n} \frac{|f-g|}{\|f-g\|_p}\omega_X^n. 
\]

We set $C_1= 2k - \inf_X \varphi$ and
$$
\psi_{\varepsilon} := (1-\varepsilon) \varphi + \varepsilon u  - C_1 \varepsilon,
$$
The function $\p_{\e}$ belongs to $SH(X,\omega,k) \cap {\mathcal C}^0(X)$;
 its Hessian measure satisfies
\begin{eqnarray*}
(\omega+dd^c \psi_{\varepsilon})^k \wedge \omega_X^{n-k} 
&\geq & (1-\varepsilon)^k e^{\varphi} f \omega_X^n + \varepsilon^k b^{k/n}  \frac{|f-g|}{\|f-g\|_p} \omega_X^n \\
& = & \left[ (1-\varepsilon)^k e^{\varphi} f +e^{\sup_X \f} |f-g| \right] \omega_X^n \\
& \geq & e^{\f-2k\e} g \omega_X^n,
\end{eqnarray*}
using \eqref{eq:stab2}, $|f-g|+f \geq g$ and 
the inequality $\log(1-\e) \geq -2\e$ valid on $[0,1/2]$.

Observe now that $\p_{\e} \leq \f -\e \f-C_1 \e \leq \f-2k\e$
by the definition of $C_1$ and $u \leq 0$. We infer
\[
(\omega+dd^c \psi_{\varepsilon})^k \wedge \omega_X^{n-k} \geq e^{\psi_{\varepsilon}} g \omega_X^n,
\]
hence  $\psi_{\varepsilon}\leq \psi$ by Corollary \ref{cor: max principle exp}. 
This yields 
$$
\f \leq \p+\e(\f-u)+C_1 \e \leq \p+C_2 ||f-g||_p^{1/k},
$$
for some uniform constant $C_2$, as desired. 
 \end{proof}

    \subsection{Proof of Theorem A}

We approximate $f$ in $L^p$ by  smooth and positive densities $f_{j}$.
For $j\geq 1$ we  consider the hermitian forms $\omega_j=\omega+2^{-j} \omega_X$.
It follows from \cite{Zh17,Szek18} that there exist 
 unique constants $c_j>0$ and 
smooth $(\omega_j,k)$-sh functions $\f_{j}$
 such that
\[
(\omega_j+dd^c \f_j)^k \wedge \omega_X^{n-k}=c_j f_{j} \omega_X^n,
\]
and $\sup_X \f_j=0$. 

We claim that the constants $c_{j}$ stay uniformly bounded,
as $j$ increases to $+\infty$. To prove the claim we use the solutions to the Monge-Amp\`ere equations by Tosatti-Weinkove \cite{TW10}: 
\[
(\omega_j+dd^c u_j)^n = b_j f_j^{n/k} \omega_X^n. 
\]

Set $p' = pk/n>1$. Since the $L^{p'}$ norm of $f_j^{n/k}$ is uniformly bounded from above, by \cite[Lemma 3.3]{GL21}, there exists a constant $c>0$ independent of $j$ and a bounded $\omega$-psh function $u$ such that $\omega_u^n\geq c f_j^{n/k}$. By \cite[Corollary 1.13]{GL21}, $b_j\geq c$. It then follows from G{\aa}rding's inequality that  
\[
(\omega_j+dd^c u_j)^k \wedge \omega_X^{n-k}\geq c^{k/n} f_j \omega_X^n. 
\]
Corollary \ref{cor: min principle} thus ensures that $c_j \geq c^{k/n}$ is uniformly bounded from below by a positive constant.  

We next bound $c_j$ from above. By G{\aa}rding's inequality we have
\[
(\omega_j +dd^c \varphi_j)\wedge \omega_X^{n-1}\geq c_j^{1/k} f_j^{1/k} \omega_X^n.
\]
The functions $\f_{j}$ belong to $SH(X,\omega_1)$ and are relatively compact in $L^1$, hence integrating over $X$ and using that $\int_X f_j^{1/k}\omega_X^n \to \int_X f^{1/k}\omega_X^n>0$, we obtain a uniform upper bound for $c_j$. 

\smallskip

Since $\omega \leq \omega_{j}$ and $||c_{j}f_{j}||_{L^p}$ is uniformly bounded from above,
 it follows  from Theorem \ref{thm:uniform} that the $\f_j$'s are uniformly bounded: 
 \[
 -C_1 \leq \varphi_j \leq 0,
 \]
 for a uniform constant $C_1$. 
 Extracting and using compactness in $L^1(X)$ of  normalized $(\omega,\omega_X,k)$-sh functions,
we can thus assume that $c_j\rightarrow c > 0$ and $\f_j$ converges in $L^1(X)$ and almost everywhere 
to a bounded function $\f$ which is $(\omega,\omega_X,k)$-sh, as $j\to +\infty$.

 We next prove that the convergence $\varphi_j\to \varphi$ is locally uniform in $\Omega$. 
We set $g_j := c_j f_j e^{-\varphi_j}$ and $g=c f e^{-\varphi}$. 
By H\"older inequality, $\|g_j-g\|_q\to 0$ for some $q>n/k$.
 Fix  $j<l$ large enough. Since $\omega_j\geq \omega_l$, we have 
\[
(\omega_j +dd^c \varphi_l)^k \wedge \omega_X^{n-k} \geq e^{\varphi_l} g_l \omega_X^n.
\] 
By Theorem \ref{thm:stability}, we infer 
\begin{equation}\label{eq: solution}
	\varphi_l \leq \varphi_j + C_2 \|g_j-g_l\|_q^{1/k}, 
\end{equation}
for a uniform constant $C_2$. We next consider 
\[
\psi_{j} : = (1-\varepsilon)\varphi_j  +\varepsilon \rho -(2k+C_1) \varepsilon, 
\]
where $\varepsilon =\delta^{-1}2^{-j}<1/2$ when $j$ is large enough. Since $\omega+dd^c \rho \geq \delta \omega_X$, we have 
\begin{flalign*}
	\omega_l + dd^c \psi_j &  \geq  \omega + \varepsilon dd^c \rho + (1-\varepsilon) dd^c \varphi_j \\
	& \geq \varepsilon \delta \omega_X+ (1-\varepsilon) (\omega_j + dd^c \varphi_j) - 2^{-j} \omega_X\\
	& = (1-\varepsilon) (\omega_j+dd^c \varphi_j). 
\end{flalign*}
It follows that $\psi_j \in SH(X,\omega_l,k)$ and 
\[
(\omega_l +dd^c \psi_j)^k \wedge \omega_X^{n-k} \geq e^{\varphi_j + k\log(1-\varepsilon)} g_j \omega_X^n\geq e^{\varphi_j-2k \varepsilon} g_j \omega_X^n \geq e^{\psi_j} g_j \omega_X^n
\]
holds in $\Omega$. The function $\psi_{j,l}:= \max(\psi_j, \varphi_l)$ is $(\omega_l,k)$-subharmonic and continuous on $X$. Lemma \ref{lem:Demkey} yields
\[
(\omega_l +dd^c \psi_{j,l})^k \wedge \omega_X^{n-k} \geq e^{\psi_{j,l}} \min(g_j,g_l) \omega_X^n. 
\]
Applying Theorem \ref{thm:stability} we then get 
\[
\psi_{j,l} \leq \varphi_l + C_3 \|\min(g_j,g_l)-g_l\|_q^{1/k} \leq \varphi_l + C_3\|g_j-g_l\|_q^{1/k},
\]
for a uniform constant $C_3$. Together with \eqref{eq: solution}, this implies 
\begin{eqnarray*}
\varphi_j + \delta^{-1}2^{-j}\rho -(2k+C_1) \delta^{-1} 2^{-j} 
&\leq & \varphi_l + C_3 \|g_j-g_l\|_q^{1/k} \\
&\leq & \varphi_j+C_1\|g_j-g_l\|_q^{1/k}.  
\end{eqnarray*}
Therefore, $\varphi_j$ converges locally uniformly in $\Omega$ to $\varphi$. 
Thus $\varphi$ is continuous in $\Omega$ where  it solves the complex Hessian equation 
\[
(\omega+dd^c \varphi)^k \wedge \omega_X^{n-k}= cf\omega_X^n.
\]

\begin{remark}
When the form $\omega$ is hermitian, 
the solution $\f$ that we construct is continuous on the whole of $X$. 
Continuity of $\f$ at points where $\omega$ vanishes
is a delicate issue, even in the case of  complex Monge-Amp\`ere equations.
We refer the interested reader to \cite{GGZ20} for a recent account.

The uniqueness of $\f$ is also a subtle problem in the hermitian setting, 
see \cite{KN19} for a partial result in the case of the complex Monge-Amp\`ere equation.
\end{remark}

\end{document}